\DeclareMathAlphabet{\pazocal}{OMS}{zplm}{m}{n}
\newtheorem{theor}{Theorem}[section]
\newtheorem{lem}[theor]{Lemma}
\newtheorem{prop}[theor]{Proposition}
\newtheorem{cor}[theor]{Corollary}
\newtheorem{defin}[theor]{Definition}
\newtheorem{rem}[theor]{Remark}
\def\C{\mathbb{C}}
\def\Z{\mathbb{Z}}
\def\N{\mathbb{N}}
\def\R{\mathbb{R}}
\def\F{\mathcal{F}}
\def\P{\mathbb{P}}
\DeclareMathOperator{\Prr}{P}
\DeclareMathOperator{\Span}{span}
\DeclareMathOperator{\Trun}{\downarrow}
\DeclareMathOperator{\clos}{Clos}
\newcommand{\ignore}[1]{}
\newcommand{\lp}{\langle}
\newcommand{\rp}{\rangle}
\newcommand{\ip}[2]{{\langle#1,#2\rangle}}
\DeclarePairedDelimiter{\norm}{\lVert}{\rVert}
\newcommand{\Vc}{{\mathcal{V}}}
\newcommand{\bw}{{\textbf{w}}}
\newcommand{\bn}{{\textbf{n}}}
\newcommand{\bg}{{\textbf{g}}}
\newcommand{\bh}{{\textbf{h}}}
\newcommand{\bq}{{\textbf{q}}}
\newcommand{\eps}{{\epsilon}}
\newcommand{\Bc}{{\Gamma}}
\newcommand{\Fc}{{\mathcal{F}}}
\newcommand{\wb}{{\mathbf w}}
\newcommand{\dd}{{\mathbf d}}
\newcommand{\deltaghq}{{\mathbf \Delta}_{\bg,\bh}^{\bq}}
\newcommand{\Aghq}{{\mathbf A}_{\bg,\bh}^{\bq}}
\newcommand{\an}{{ t}}
\newcommand{\ddn}{{\gamma}}
\newcommand{\rro}{{\wedge}}
\DeclareMathOperator{\dis}{\dd}
\newcommand{\Sbn}{{\mathbb{S}}_n}
\newcommand{\A}{\F[X_1,X_2,\ldots,X_n]^{\Sbn}}
\newcommand{\Sn}{{\mathcal S}_n}
\title{G-Invariant Representations using Coorbits: Injectivity Properties}
\author{{\bf Radu Balan, Efstratios Tsoukanis} \\
Department of Mathematics \\ University of Maryland \\ College Park, MD 20742 \\ email: rvbalan@umd.edu efstratios.tsoukanis@cgu.edu}
\date{\today}
\pgfplotsset{compat=1.18} 
\begin{document}

\maketitle

\begin{abstract}
Consider a finite-dimensional real vector space equipped with a finite group acting unitarily on it. We address the general problem of constructing Euclidean stable embeddings of the quotient space of orbits. Our approach is based on subsets of sorted coorbits with respect to selected window vectors. We derive conditions under which such embeddings are injective, using techniques from semi-algebraic analysis. These results are then applied to the specific case of planar rotation invariance.

\end{abstract}

\section{Introduction}

Invariant theory is the field of mathematics that studies objects that remain unchanged under certain transformations. At its core, invariant theory seeks to identify and understand the algebraic and geometric properties of mathematical objects that are preserved under transformations induced by symmetries.

 It comes as no surprise that invariant theory has applications in modern mathematics. Machine learning techniques yield impressive results when training is provided with large datasets. In some scenarios, our training dataset may be limited in size, but we are aware of certain underlying symmetries of our model structure. For instance, in graph machine learning, 
 each graph is represented by an adjacency (or weight) matrix and potentially a data matrix with same number of rows as number of nodes. For classification or regression tasks at the graph level, the output of the classification or regression function should not change if nodes are relabeled.

One approach to these kinds of problems is full data augmentation, 
which involves expanding the training set to contain the complete orbit generated by the group action on each data point. 
However, a significant challenge emerges because of the computational demands of deriving such highly symmetric functions.

Another solution is to embed the input data into an Euclidean space $\mathbb{R}^m$ using a symmetry-invariant embedding $\Psi$ and utilize $\mathbb{R}^m$ as a feature space for further processing. 
To be practical, such embeddings should separate data orbits while remaining computationally efficient. 
This problem is an instance of \emph{invariant machine learning}. Some papers deal with theoretical problems associated with invariant machine learning
\cite{mixon2022injectivity,aslan2023group,dym2022low,DUFRESNE20091979,cahill2023bilipschitz,yarotsky2021universal,edidin2025orbit}. Other articles deal with practical issues related to network implementation and optimization \cite{bronstein,deepsets,Gilmer_2017arXiv170401212G,pmlr-v97-maron19a}.

Common group actions in invariant machine learning theory are the group of permutations
\cite{sannai2020universal,cahill22,balan2022permutation},
the group of reflections \cite{mixon2022max}, and  the group of translations \cite{Cahill19}.

A related, yet slightly different problem, is the case 
of equivariant embeddings \cite{lipman2022,maron2018invariant,villar2021scalars,villar2022dimensionless,blum2022equivariant,keriven2019universal} where the task is to produce a map that intertwines between two representations of the same group.

Our work extends the approach first introduces in \cite{balan2022permutation}, and combines that work with   the {\em max filter} theory described in \cite{cahill22,mixon2022injectivity}. This paper presents a unified framework for the two approaches: the permutation invariant representation in \cite{balan2022permutation} encodes the full sorted coorbit by removing certain duplicates, while the max filter approach extracts only the maximum element of each coorbit. Our framework replaces these two extreme cases (full coorbit versus only the top element) with an embedding that selects a fixed subset of sorted coorbits. When the subset contains only the top elements, 
then the embedding turns into the max filter embedding 
 \cite{cahill22}; when the subset contains all entire coorbits, then the embedding extends the approach in \cite{balan2022permutation}.

A different approach to invariant machine learning is proposed in \cite{yarotsky2021universal}. The author of that paper constructs efficient approximations of polynomial group invariants using tools from machine learning. Our methods are completely different, and are based on the theory of coorbit representations. 

The construction of certain permutation invariant embeddings is closely connected to the phase retrieval problem, a well-studied topic in literature \cite{balan06,bcmn,CSV12,CEHV13,bendory2025generalized,edidin2023generic,alaifari2022phase}. 
In \cite{Balan2023Rel} we showed an isometric 
equivalence between the real case of the phase retrieval problem, and the permutation invariant representation for the special group $S_2$. 

Methods from semi-algebraic analysis are used to prove our main results. Our approach has been inspired by the semi-agebraic methods in \cite{dym2022low} as well as from an earlier paper \cite{balan06}.

Our paper is organized as follows:
Section \ref{sec2} introduces our embedding scheme and states the main results. 
Proofs of the main results are presented in section \ref{sec3}. Section \ref{sec4} 
applies our results to the case of planar rotations. 
A summary of results from semi-algebraic geometry, as well as proofs of certain lemmas, are included in Appendix.

\section{Notations and Statement of Main Results\label{sec2}}
Let $(\Vc,\ip{\cdot}{\cdot} )$ be a $d$-dimensional real vector space with scalar product, where $d\geq2$.
Assume $(G, \cdot)$ is a finite group of order $|G|=N$ acting unitarily on $\Vc$.
For every $g \in G$, we denote by $U_gx$ the group action on vector $x\in\Vc$.
Let $\hat{\Vc}=\Vc/\sim$ denote the quotient space with respect to the action of group $G$. 
 We denote by $[x]$ the orbit of vector $x$, i.e. $[x]= \{U_gx :g \in G\}$. The natural metric, $\dis: \hat{\Vc} \times \hat{\Vc} \to \R$, is defined by
\begin{equation}    
\dis([x],[y]) := \min_{h_1,h_2 \in G} \norm{U_{h_1}x-U_{h_2}y} = \min_{g \in G} \norm{x-U_gy}.
\end{equation}
Note $(\hat{\Vc},\dis)$ is a complete metric space.

Our goal is to construct a bi-Lipschitz Euclidean embedding of the metric space $(\hat{\Vc},\dis)$ into an Euclidean space $\R^m$.

Specifically, we want to construct a function $\Psi: \Vc \to \R^m$ such that
\begin{enumerate}
	\item  $\Psi(U_gx)=\Psi(x),\ \forall x \in \Vc,\ \forall g \in G$,\label{property1}
	\item  If $x,y \in \Vc$ are such that $\Psi(x)=\Psi(y)$, then there exist $g \in G$ such that $y=U_gx$,\label{property2}
	\item  There are $0<a<b<\infty$ such that for any $x,y \in \Vc$
	\[a \dis([x],[y])^2 \le\norm{\Psi(x)-\Psi(y)}^2\le b (\dis([x],[y]))^2.\]\label{property3}
\end{enumerate}   

The invariance property \eqref{property1} allows us to lift $\Psi$ to a map $\hat{\Psi}$ acting on the quotient space $\hat{\Vc}=\Vc/\sim$:
\[ \hat{\Psi}:\hat{\Vc}\rightarrow\R^m,\quad \hat{\Psi}([x])=\Psi(x), \quad\forall [x]\in\hat{\Vc}. \]
If a $G$-invariant map $\Psi$ satisfies property \eqref{property2} we say that $\Psi$ {\em separates} $G$-orbits in $\Vc$.

Our construction for the embedding $\Psi$ is based on the non-linear sorting map $\Trun$ described next.

\begin{defin}
	$\Trun:\R^r \to \R^r$ denoted the operator that takes as input a vector in $\R^r$ and returns the monotonically decreasing sorted  vector of same length $r$ that has same entries as the input vector:
 \[ x\in\R^r \mapsto \Trun x = (x_{\sigma(i)})_{1\leq i\leq r}~~,~~x_{\sigma(1)}\geq\cdots\geq x_{\sigma(r)} \]
 for some permutation $\sigma:\{1,\ldots,r\}\rightarrow\{1,\ldots,r\}$.
\end{defin}

For an integer $p \in \N$, we denote $[p]=\{1,2,\ldots,p\}$. 

For a set $S$, $|S|$ denotes its cardinal. Fix a $p$-tuple of vectors $\bw= (w_1,\dots,w_p)\in \Vc^p$. For any  $i \in [p]$ and $j \in [N]$ we define the operator $\Phi_{w_i,j}: \Vc \to \R$ so that $\Phi_{w_i,j}(x)$ is the $j$-th coordinate of the sorted vector $\Trun(\ip{U_gw_i}{x})_{g\in G }$.
Fix a set $S \subset [N] \times [p]$ such that $|S|=m$, and for $i \in [p]$, let $S_i=\{k \in [N]: (k,i)\in S\}$ (the $i^{th}$ column of $S$). Denote by $m_i$ the cardinal of the set $S_i$, $m_i=|S_i|$. Thus $m= \sum_{i=1}^p m_i$.  
\begin{defin}\label{sortedcoorbit}
    The sorted coorbit embedding $\Phi_{\bw,S}$ associated to windows $\bw\in\Vc^p$ and index set $S\subset [N] \times [p]$ is given by the map
\begin{equation}
    \label{emb}
\Phi_{\bw,S}:\Vc\rightarrow\R^m~,~
\Phi_{\bw,S}(x):=[\{\Phi_{w_1,j}(x)\}_{j \in S_1},\dots,\{\Phi_{w_p,j}(x)\}_{j \in S_p}] \in \R^m.
\end{equation}
For $S=[N]\times[p]$ we simply use $\Phi_{\bw}=\Phi_{\bw,[N]\times[p]}$ to denote full sorted coorbits.  
For a single window $w \in \Vc$, the full sorted coorbit embedding $\Phi_w$ is defined as:
    \[
    \Phi_w: \mathcal{V} \to \mathbb{R}^N, \quad \Phi_w(x) := \downarrow \left( \left\{ \langle x, U_g w \rangle \right\}_{g \in G} \right),
    \]
\end{defin}
\begin{figure}
 \centering
    \begin{tikzpicture}[]
    \draw[black, very thick] (6,0) rectangle (10,4)[label=right: {$\Vc=\R^d$}] {}; 
    \draw[black, very thick] (12,0) rectangle (15,4);
    \draw (8,1.8) node (yaxis) [below] {$[y]$};
     \draw  (8,3.3) node (yaxis) [below] {$[x]$};
     \draw  (8,-0.3) node (yaxis) [below] {$\hat{\Vc}$};
     \draw  (13.5,-0.3) node (yaxis) [below] {$\R^m$};
     \draw  (11,2.9) node (yaxis) [below] {$\Phi_{\bw,S}$};
     \draw  (11,1.9) node (yaxis) [below] {$\Phi_{\bw,S}$};
      \draw  (13.8,3.1) node (yaxis) [above] {$\Phi_{\bw,S}([x])$};
     \draw  (13,2.1) node (yaxis) [above] {$\Phi_{\bw,S}([y])$};

 \draw[red, thick, dotted, scale=1,domain=0:3,smooth]
  plot[parametric,id=parametric-example] function{6+sqrt(9-t*t),t};
 \draw[green, thick, dotted, scale=1,domain=0:4,smooth]
  plot[parametric,id=parametric-example] function{6+sqrt(16-t*t),t};
    \foreach \Point in {(14,3)  }{
    \node[green] at \Point {\textbullet};}
      \foreach \Point in {(13,2)  }{
    \node[red] at \Point {\textbullet};}
    \draw[dashed,->] (8.77,3) -- (13.8,3) [label=right: {$\Vc=\R^d$}] {};;
    \draw[dashed,->] (8.23,2) -- (12.8,2);
    \end{tikzpicture}
\caption{Proposed embedding scheme}
\label{fig1}
\end{figure}

\vspace{5mm}

Fix $\bw=(w_1,\dots,w_p)$ and take $S=\{(1,1),\dots,(1,p)\}\subset [N] \times [p]$. Recall that  $S_i=\{k \in [N]: (k,i)\in S\}$. 
In that case $\forall i \in [p]$, $S_i= \{1\}$, so $\Phi_{\bw,S}$ is the \emph{max filter}  map
$(\lp\lp w_1 ,x\rp \rp,\dots ,\lp\lp w_m ,x\rp \rp)^T$ from \cite{cahill2022group}. 

In \cite{cahill2022group}, it is shown that $2d$ vectors are enough for the construction of an injective embedding:

\begin{theor}[Lemma 12 in \cite{cahill2022group}] Consider any finite subgroup $G \le O(d)$. For a generic $\bw \in \Vc^p$ and for $S=\{(1,1),\dots,(1,p)\}$, the map $\Phi_{\bw,S}$ separates $G$-orbits in $\R^d$ provided that  $p\geq 2d$.
\end{theor}

 For fixed $n \in [N]$ we define the set of $n$-tuples of {\em distinct} group elements: 
\begin{equation}
H_n = \left\{(g_1,\ldots,g_n) \in G^n~{\rm such~that}~\forall i\neq j\in[n],g_i\neq g_j\right\}
\end{equation}
The cardinal of $H_n$ is $\frac{N!}{(N-n)!}$.
To add clarity to the notation, when dealing with multi-indexed $n$-tuples $g=(g_1,\ldots,g_n)\in H_n$, 
we shall often refer to its entries as $g(1)=g_1$, ... , $g(n)=g_n$. 
   
The following semi-algebraic sets play a crucial role in our paper. 
For fixed $n \in [N]$, $q\geq 0$, and $g,h \in H_n$, we define the following set:
\begin{equation} \label{eq:Gammaghq}
\Bc_{g,h}^{q} := \{(x,y)\in \Vc \times  \Vc: \dim(\Span(U_{g(1)}x-U_{h(1)}y,\dots,U_{g(n)}x-U_{h(n)}y)) = q\}.
\end{equation}
Notice that the set $\Bc_{g,h}^{q}$ is  semi-algebraic. It is not an algebraic set since its construction involves polynomial equalities {\em and} inequalities. 
We denote by $\gamma_{g,h}^{q}$ its semi-algebraic dimension,  
\begin{equation}
\label{eq:gamma_dim}
    \gamma_{g,h}^{q} = \dim\,\Bc_{g,h}^{q}.
\end{equation}
By convention, the semi-algebraic dimension of the empty set is $-1$. Thus $-1\leq \gamma_{g,h}^q\leq 2d$. 
The following quantities are central to our results:
\begin{equation}\label{eq:rhonq}
    \rho_{n}(q):=\max_{g,h\in H_n}\gamma_{g,h}^q = \max_{g,h\in H_n} \dim\,\Bc_{g,h}^{q}.
\end{equation}

We denote by $V_G$ the linear subspace of vectors invariant to the action of each group element, i.e. $V_G = \{x \in \Vc: U_gx=x,~\forall g \in G\}$. 
We let $d_G$ denote its dimension $d_G=\dim(V_G)$.
Let $W_G=\{(x,x) ~;~x\in V_G\}\subset V_G\times V_G\subset \Vc\times\Vc$. Notice that $W_G$ is a linear space of the same dimension as $V_G$, $\dim(W_G)=\dim(V_G)=d_G$.


\subsection{Main results}

Our goal is to examine pairs $(\bw,S)$ with $\bw\in \Vc^p$ and $S\subset [N]\times [p]$ such that $\hat{\Phi}_{\bw,S}:\hat{\Vc} \to \R^m$ is injective, where $m=|S|$. Recall the notation $S_i=\{k \in [N]: (k,i)\in S\}$ (the $i^{th}$ "column" of $S$).

\begin{theor}\label{theor1}
	Let $p \geq 2\,dim(\Vc)-d_G$. Then, for a generic $\bw \in \Vc^p$ with respect to Zariski topology, the following holds true:  for every $S\subset [N] \times [p]$ so that $S_i\neq\emptyset $,  $\forall i \in [p]$, the map $\hat{\Phi}_{\bw,S}$ is injective.  
\end{theor}

\begin{theor}\label{theorlow}
Let $M\subset\Vc$ be a semi-algebraic set of dimension $k$, $k\leq dim(\Vc)$. Assume $M$ is invariant to $U_g$, for each $g\in G$, i.e. $U_g(M)\subset M$. Let $p \geq 2k+1$.
Then, for a generic $\bw \in \Vc^p$ with respect to Zariski topology, the following holds true: for every $S\subset [N] \times [p]$ so that $S_i\neq\emptyset$,  $\forall i \in [p]$, the map $\hat{\Phi}_{\bw,S}{\vert}_{\hat{M}}:\hat{M}\rightarrow\R^{|S|}$ is injective.
\end{theor}

\begin{theor} \label{theoremmain2}

a.  Fix a positive integer $p$ and $\bn=(n_1,\ldots,n_p) \in [N]^p$ such that 

\begin{equation}
    \label{eq5modified}
\max_{
q_1\in[n_1],\ldots,q_p\in[n_p]
}\left(
min_{i\in[p]}\rho_{n_i}(q_i) -(q_1+\ldots+q_p) \right)
\leq d_G  .
\end{equation}

Then, for a generic ${\bw} \in \Vc^p$ with respect to Zariski topology, the following holds true:  
for every $S\subset [N] \times [p]$ so that $|S_i|\geq n_i$,  $\forall i \in [p]$, the map $\hat{\Phi}_{\bw,S}$ is injective.

b. In the special case $n_1=\dots=n_p=n$ choose $p$ so that
\begin{equation} 
 \label{eq:pTh2.8}
 p >  \max_{q\in [n]} \frac{1}{q}\left(\rho_n(q) -d_G-1\right)  . 
\end{equation}
	Then, for a generic ${\bw} \in \Vc^p$ with respect to the Zariski topology, and for every $S\subset [N] \times [p]$ such that $|S_i|\geq n$, $\forall i \in [p]$, the map $\hat{\Phi}_{\bw,S}$ is injective.  
\end{theor}

\begin{rem}
\Cref{theor1} can be seen as a corollary of \Cref{theoremmain2} part b. For $n=1$, $q$ must be $1$ and $\gamma^q_{g,h}=\gamma^1_{g,h}=2d$. Thus condition (\ref{eq:pTh2.8}) becomes $p>2d-d_G-1$ which means $p\geq 2d-d_G$. 
\end{rem}

\begin{rem}
    Let 
$$p_n = \max_{q\in [n]} \frac{1}{q}\left( \rho_n(q) -d_G-1\right) = \max_{q\in [n]} \frac{1}{q}\left(\max_{g,h\in H_n} \gamma^q_{g,h} -d_G-1\right). $$ 
Notice that in general $p_n$ is a non-integer real number. In \Cref{prop_rho} we establish that for every $1\leq n\leq N-1$, $p_{n+1}\leq p_n$ which is consistent with our intuition that additional measurements can only help for the map $\hat{\Phi}_{\bw,S}$ to become injective. 
    Furthermore, since $\gamma^q_{g,h}\leq 2d$, it follows $p_n\leq 2d-d_G-1$ for every $n$.
\end{rem}
An illustration of these theorems is provided in Section \ref{sec4} where we analyze the finite group of planar rotations.

\section{Derivations of the Main Results\label{sec3}}

In this section, we analyze the set $\Fc_S$ of $p$-tuples $\wb\in\Vc^p$ such that $\hat{\Phi}_{\wb,S}$ is not injective on the quotient space $\hat{\Vc}$. We identify a finite union of semi-algebraic sets that includes $\Fc_S$. In turn, each of these semi-algebraic sets is obtained as the projection of the total space of certain semi-algebraic vector bundles. A careful analysis of its semi-algebraic dimension provides the conclusions expressed in \Cref{theor1}, \Cref{theorlow} and \Cref{theoremmain2}. This "lift-and-project" proof technique was introduced in \cite{balan06} and has been used in several papers since \cite{Cahill19,cahill2022group,dym2022low,wang2019generalized,rong2021almost,conca2015algebraic}.

Recall that $V_G=\{x\in\Vc~,~U_g x=x,\forall g\in G\}$ and $W_G=\{(x,x)~,~x\in V_G\}$ are linear spaces of the same dimension $d_G$. The orthogonal complement $W_G^\perp$ is a linear subspace of $\Vc^2$ of dimension $2d-d_G$, with $d=dim(\Vc)$, which is characterized by $W_G^\perp = \{(x,y)\in\Vc\times\Vc~,~ \ip{x+y}{x}=0,\forall z\in V_G\}$.

For fixed $w \in \Vc$ and $i \in [N]$, recall that $\Phi_{w,j}(x)$ represents the $j$-th coordinate of $\Trun(\ip{U_gw}{x})_{g\in G}$. 
The maps $\Phi_{w,j}$ satisfy specific scaling and symmetry properties summarized in the following lemma whose proof we omit:
\begin{lem}\label{norm}
	For $j\in [N]$, $\lambda\geq 0$, $x,w\in\Vc$ and $z\in V_G$,
	\begin{gather}
		\Phi_{\lambda w,j}(x)  =\Phi_{w,j}(\lambda x)  =\lambda \Phi_{w,j}(x)\\
		\label{symprop}
		\Phi_{w,j}(x) =\Phi_{x,j}(w), \\
        \Phi_{w,j}(\lambda x+z) = \Phi_{w,j}(\lambda x) + \Phi_{w,j}(z). 
	\end{gather}    
\end{lem}

Let $ S_1(\Vc \times \Vc)=\{(x,y)\in \Vc \times \Vc: \norm{x}^2+\norm{y}^2=1\}$ denote the unit sphere of $\Vc^2$.   
The following lemma reduces the injectivity property to analysis on a smaller-dimensional manifold (and semi-algebraic set):
\begin{lem}\label{norm2}
    The following are equivalent:
    \begin{enumerate}
        \item $\hat{\Phi}_{\bw,S}:\hat{\Vc}\rightarrow\R^m$ is injective;
        \item $\forall (x,y) \in  W^{\bot}_G~,~\Phi_{\bw,S}(x)=\Phi_{\bw,S}(y) \Rightarrow x \sim y$.
        \item $\forall (x,y) \in  W^{\bot}_G\cap S_1(\Vc \times \Vc)~,~\Phi_{\bw,S}(x)=\Phi_{\bw,S}(y) \Rightarrow x \sim y$.
    \end{enumerate}
\end{lem}

\begin{proof}
\mbox{}

$1\Rightarrow 2$. This is obvious since 2. refers to a subset of $\Vc^2$, unlike 1.

$2\Rightarrow 3$. This is again obvious by same reason.

$3\Rightarrow 1$. Assume 3. holds true.
Let $x,y\in\Vc$ so that $\Phi_{\bw,S}(x)=\Phi_{\bw,S}(y)$. We need to show that $x\sim y$. 

Notice that  $(x,y)= (x_1,y_1)+ (z,z)$, for some unique  $(x_1,y_1) \in W_G^\perp $ and  $(z,z) \in W_G$. If $x_1=y_1=0$ then $x=y$ and the conclusion $x\sim y$ follows.

Assume now that $(x_1,y_1)\neq (0,0)$. Let $\lambda=\sqrt{\norm{x}^2+\norm{y}^2}$, 
$x_2=\frac{1}{\lambda}x_1$, and $y_2=\frac{1}{\lambda}y_1$. 
 Using Lemma \Cref{norm} $\Phi_{\bw,S}(x)=\lambda\Phi_{\bw,S}(x_2)+\Phi_{\bw,S}(z)$ and $\Phi_{\bw,S}(y)=\lambda\Phi_{\bw,S}(y_2)+\Phi_{\bw,S}(z)$. Therefore we obtain that $\Phi_{\bw,S}(x_2)=\Phi_{\bw,S}(y_2)$. By assumption 3. we obtain that $x_2\sim y_2$, that is $y_2=U_{g_0} x_2$ for some $g_0\in G$. Therefore $y=\lambda y_2+z=\lambda U_{g_0}x_2+U_{g_0}z=U_{g_0}x$ which means $x\sim y$.

\end{proof}

The unit sphere of $W_G^\perp$,  $S_1(W^{\bot}_G) =  W^{\bot}_G\cap S_1(\Vc \times \Vc)$ is algebraic, and hence a semi-algebraic set of dimension $2d-d_G-1$ (in fact, it is an algebraic manifold).

For $x,y \in \Vc$, $x\not\sim y$, and $j \in [N]$ the set of templates $w\in\Vc$ so that $\Phi_{w,j}(x)=\Phi_{w,j}(y)$ is included in the finite union of $d-1$-dimensional linear subspaces, $\bigcup_{h_1,h_2 \in G} \{U_{h_1}x- U_{h_2}y\}^\bot$. 
N

Let $S \subset [N] \times [p]$. We let $\Fc_S$ denote all $p$-tuples $\bw = (w_1, \dots, w_p)$ such that 
$\Phi_{\bw,S}$
fails to separate all possible non-equivalent vectors $x, y \in \Vc$:

\begin{align}\label{eq:defFs}
	\Fc_{S} := \big\{\bw \in \Vc^p 	&: \exists x,y \in \Vc\text{ with }x\nsim y\\
	&\text{ and }  \Phi_{w_i,j}(x) =\Phi_{w_i,j}(y), \forall (i,j)\in S \big\}. \nonumber
\end{align}
Following the terminology in \cite{dym2022low}, we refer to $\Fc_S$ as the "bad set", because it contains the set of $p$ -tuples $\bw \in \Vc^p$ that do not produce an injective embedding $\hat{\Phi}_{\bw,S}$. Our goal is to establish a sufficient condition for the "bad set" $\Fc_S$ to be included in a proper, closed Zariski subset of $\Vc^p$.

\subsection{The case of using one measurement from each coorbit: Theorem \ref{theor1}\label{subsec:1}}

Let 
\begin{equation}
    \label{eq:Gamma}
    \Bc:=\{(x,y) \in W_G^{\bot}\cap S_1(\Vc \times \Vc) : x \nsim y \} = S_1(W_G^\perp)\setminus \cup_{g\in G}\{(x,U_gx)~,~x\in\Vc\}.
\end{equation} 
Notice that $\Bc$ is a semi-algebraic set of dimension $2d-d_G-1$.

If the assumptions of \Cref{theor1} are satisfied then
\begin{align*}
	\Fc_{S} & \subset \bigcup_{(x,y) \in \Bc} \bigcup_{h_1,\dots,h_{2p} \in G}\left( \{U_{h_{1}}x-U_{h_{2}}y\}^{\bot} \times \dots \times \{U_{h_{2p-1}}x-U_{h_{2p}}y\}^{\bot}\right)\\
	&= \bigcup_{h_1,\dots h_{2p} \in G} \bigcup_{(x,y) \in \Bc} \left(\{U_{h_1}x-U_{h_2}y\}^{\bot} \times \dots \times \{U_{h_{2p-1}}x-U_{h_{2p}}y\}^{\bot}\right).
\end{align*}
For fixed $\bg,\bh \in G^p$, let

\begin{equation}\label{calF}
\Fc_{\bg,\bh}:=\bigcup_{(x,y) \in \Bc}  \{U_{g_1}x-U_{h_1}y\}^{\bot} \times \dots \times \{U_{g_{p}}x-U_{h_{p}}y\}^{\bot} .
\end{equation}



Notice that since $G$ is a finite group, in order to prove \Cref{theor1}, it is sufficient to show that, for any choice of $\bg,\bh \in G^p$, the set $\Fc_{\bg,\bh}$ has a semialgebraic dimension strictly smaller than $dp$. This fact is established as a consequence of \Cref{cor2.4}.

\subsection{Low-dimensional data embedding: Theorem \ref{theorlow}}
The Manifold hypothesis asserts that real-world data lie on a lower-dimensional manifold within the high-dimensional ambient space.  
Under the assumptions of Theorem \ref{theorlow} our model's data lie on a semi-algebraic set $M$ of dimension $k$, invariant under the action of the group $G$, i.e. $U_gx \in M,~\forall g \in G, x\in M$.
Note that this prior is highly general, as semi-algebraic sets encompass various model classes, including linear models, sparse representations, and generative models based on ReLU neural networks. Similar ideas for the phase retrieval problem has been studied in \cite{bendory2023phase,bendory2024transversality,amir2025stability}. 


Let $\tilde{M}$ denote the set
\begin{equation} \label{eq:Mtilde}
\tilde{M}=\{(x,y) \in M\times M: x \nsim y\}.
\end{equation} 
Note that $\tilde{M}$ is a semi-algebraic set of dimension at most $2k$.
Similarly to the argument in subsection \ref{subsec:1}, it suffices to prove that for any selection of $\bg,\bh \in G^p$, the set
\begin{equation}
\label{eq:14}
\mathcal{S}_{\bg,\bh}:=\bigcup_{(x,y) \in \tilde{M}}  \{U_{g_1}x-U_{h_1}y\}^{\bot} \times \dots \times \{U_{g_{p}}x-U_{h_{p}}y\}^{\bot}  
\end{equation}
has a semi-algebraic dimension strictly less than $dp$.
This fact is established as a consequence of the \Cref{cor2.5}.

\subsection{Using $n$ entries from each coorbit}

For $g,h\in H_n$, $g=(g(1),\ldots,g(n))$, $h=(h(1),\ldots,h(n))$, recall the semi-algebraic set $\Bc_{g,h}^{q}$ introduced in \Cref{eq:Gammaghq}, 
\[ 
\Bc_{g,h}^{q} := \{(x,y)\in \Vc \times  \Vc: \dim(\Span(U_{g(1)}x-U_{h(1)}y,\dots,U_{g(n)}x-U_{h(n)}y)) = q\}.
\]
It has semi-algebraic dimension denoted by $\gamma_{g,h}^{q}$:  $\gamma_{g,h}^{q} = dim\,\Bc_{g,h}^{q}$.

\begin{defin}
    For fixed $q \in [n]$ and $g,h \in H_n$, let   $\Delta_{g,h}^{q}$ denote the set of all pairs $(x,y) \in \Vc \times \Vc$ such that
    \begin{enumerate}
        \item $\dim(\Span\{U_{g(1)}x-U_{h(1)}y,\dots,U_{g(n)}x-U_{h(n)}y\}) = q$,
        \item $\ip{x+y}{z}=0,~\forall z \in V_G$,
        \item $\norm{x}^2+\norm{y}^2=1$.
    \end{enumerate}
\end{defin}
Explicitly,  
\begin{equation}
\Delta_{g,h}^{q}:=\Bc_{g,h}^{q}\cap W_G^\perp \cap S_1(\Vc \times \Vc)=\Bc_{g,h}^{q}\cap S_1(W_G^\perp),
\end{equation}
where $S_1(X)$ denotes the unit Euclidean sphere of the linear subspace $X$.

Notice that $\Delta_{g,h}^{q}$ is a semi-algebraic set since it is defined by a set of algebraic equalities (conditions 2 and 3 above) and inequalities (condition 1).
\begin{prop}\label{dimDelta}
Fix $n,q$ with $1\leq q\leq n \leq N$, 
and $g,h \in H_n$. If $\Bc_{g,h}^{q}$ is not empty, then $\Delta^q_{g,h}$ is a semi-algebraic set of semi-algebraic dimension given by
\[ 
dim\,\Delta_{g,h}^{q} =  \gamma_{g,h}^{q}-d_G-1. 
\]
\end{prop}

\begin{proof}

Let $(x,y) \in \Bc_{g,h}^{q}$. Then, for all $z \in V_G$ and for all $j \in [n]$, we observe that:
\[
U_{g_j}(x+z) - U_{h_j}(y+z) = U_{g_j}(x) - U_{h_j}(y).
\]
This implies that $(x+z, y+z) \in \Bc_{g,h}^{q}$ for all $z \in V_G$. Thus, $\Bc_{g,h}^{q}+W_G=\Bc_{g,h}^{q}$.

Let $\tilde{\Bc}_{g,h}^{q}=\Bc_{g,h}^{q}\cap W_G^\perp$.  
Notice that $\tilde{\Bc}_{g,h}^{q}$ is a semi-algebraic set since it is defined by a set of algebraic equalities and inequalities.
We denote by $\tilde{\gamma}_{g,h}^{q}$ its semi-algebraic dimension, $\tilde{\gamma}_{g,h}^{q} = \dim\, \tilde{\Bc}_{g,h}^{q}$.
By invariance of $\Bc_{g,h}^{q}$ with respect to addition with vectors from $W_G$, we obtain $\tilde{\Bc}_{g,h}^{q}=\Pi(\Bc_{g,h}^{q})$, where $\Pi:\Vc^2\rightarrow\Vc^2$ is the orthogonal projection onto $W_G^{\perp}$. 
Furthermore, we obtain
\[ \Bc_{g,h}^{q} = \tilde{\Bc}_{g,h}^{q}\oplus W_G
\sim \tilde{\Bc}_{g,h}^{q} \times W_G
\]
where $\sim$ denotes a semi-algebraic homeomorphism.

By Proposition \ref{cartesian} it follows that:
\begin{equation}\label{eq:1111}
\gamma_{g,h}^{q} = dim\,\Bc_{g,h}^{q} = \dim\, \tilde{\Bc}_{g,h}^{q} +dim(W_G) = \tilde{\gamma}_{g,h}^{q} + d_G.
\end{equation}

Since $\tilde{\Bc}_{g,h}^{q}$ is positively homogeneous, it can be represented in a unique way as:
\[
\tilde{\Bc}_{g,h}^{q} = \{ \lambda x \mid x \in \Delta_{g,h}^{q}, \lambda > 0 \}.
\]

Hence, $\tilde{\Bc}_{g,h}^{q}$ is semi-algebraically homeomorphic to \( \Delta_{g,h}^{q} \times (0, \infty) \) and by Proposition \ref{cartesian}, we have:
\[ \tilde{\gamma}_{g,h}^{q} = 
\dim\, \tilde{\Bc}_{g,h}^{q}  = \dim\,\Delta_{g,h}^{q} + \dim\, (0, \infty) = \dim\,\Delta_{g,h}^{q} + 1.
\]
Together with \ref{eq:1111} this concludes the proof of this Proposition.
\end{proof}

Recall the set of "bad" templates
\begin{gather}\label{eq:badset}
    \begin{aligned}
        \Fc_S:= \{\bw \in \Vc^p & : \exists (x,y) \in \Bc \text{ such that } \Phi_{w_i,j}(x) =\Phi_{w_i,j}(y)\ \forall (i,j) \in S\},
    \end{aligned}
\end{gather}
where $\Bc$ has been defined in (\ref{eq:Gamma}), 
$\Bc=\{(x,y)\in W_G^\perp \cap S_1(\Vc\times\Vc)~:~x\not\sim y\}$.

To establish the proof of \Cref{theoremmain2}, it suffices to demonstrate that for every $S$ satisfying the assumptions of the theorem, the set $\Fc_S$ is included in a finite union of closed Zariski sets of nonempty complements.

\begin{align*}
    \begin{split}
        \Fc_S &= \{\bw \in \Vc^p: \exists x,y \in \Vc,~x \nsim y : \Phi_{w_i,j}(x)=\Phi_{w_i,j}(y),~\forall (i,j) \in S\}    \\
        & \stackrel{\Cref{norm2}}{=} \{\bw \in \Vc^p: \exists (x,y) \in \Bc : \Phi_{w_i,j}(x)=\Phi_{w_i,j}(y),~\forall (i,j) \in S\}    \\
        &\subset \bigcup_{(x,y) \in \Bc} \ \bigcup_{\substack{\pi_1,\dots,\pi_p \in S_N\\ \sigma_1,\dots,\sigma_p \in S_N}} \{\bw \in \Vc^p: \ip{x}{U_{g_{\pi_i(j)}}^{-1}w_i}=\ip{y}{U_{g_{\sigma_i(j)}}^{-1}w_i},~ \forall (i,j)\in S\}    \\ & =\bigcup_{\substack{\pi_1,\dots,\pi_p \in S_N\\ \sigma_1,\dots,\sigma_p \in S_N}} 
    \bigcup_{(x,y) \in \Bc} \bigtimes_{i=1}^p  \{w \in \Vc: \ip{{U_{g_{\pi_i(j)}}x-U_{g_{\sigma_i(j)}}}y}{w}=0,~ \forall j\in S_i\} 
        \\ & =\bigcup_{\bg,\bh \in (H_n)^p} \bigcup_{(x,y) \in \Bc} \bigtimes_{i=1}^p \big(  \bigcap_{j\in S_i} \{ U_{g^i(j)}x-U_{h^i(j)}y\}^{\bot}\big) =
         \bigcup_{\bg,\bh \in (H_n)^p} \Fc_{\substack{\bg,\bh}}
    \end{split}
\end{align*}
where the last set is defined, for each $\bg=(g^1,\dots,g^p), \bh=(h^1,\dots,h^p) \in H_{n_1} \times \dots \times H_{n_p}$,
\begin{equation*}
    \begin{split}
    	\Fc_{\substack{\bg,\bh}}
            &= \bigcup_{(x,y) \in \Bc} \big(\bigtimes_{i=1}^p \bigcap_{j\in S_i}\{U_{g^i(j)}x-U_{h^i(j)}y\}^{\bot}\big)
= \\
 &= \bigcup_{(x,y) \in \Bc} \left( 
\left(
\bigcap_{j\in S_1}\{U_{g^1(j)}x-U_{h^1(j)}y\}^{\bot} 
\right) \times \cdots \times 
\left(
\bigcap_{j\in S_p}\{U_{g^p(j)}x-U_{h^p(j)}y\}^{\bot} 
\right)
\right)
\\
    \end{split}
\end{equation*}


Fix $\bg= (g^1,\dots,g^p), \bh= (h^1,\dots,h^p)$  elements of $H_{n_1} \times \dots \times H_{n_p}$. 
Each $\bg,\bh$ induces a cover of $\Bc$,
\[\Bc \subset \bigcup_{\substack{q_1,\dots,q_p \\ q_i \in [n_i]}}  \bigcap_{i=1}^p \Delta^{q_i}_{g^i,h^i}.\]

Therefore,
\begin{align*}
       	\Fc_{\substack{\bg,\bh}}
            \subset \bigcup_{\substack{q_1,\dots,q_p \\ q_i \in [n_i]}} \bigcup_{(x,y) \in \bigcap_{i=1}^p \Delta^{q_i}_{g^i,h^i}}\big(\bigtimes_{i=1}^p \Span(U_{g^i(1)}x-U_{h^i(1)}y,\dots,U_{g^i(n_i)}x-U_{h^i(n_i)}y)^{\bot}\big).
\end{align*}

The first union can be restricted only to those
$\bq=(q_1,\dots,q_p) \in [n]^p$ such that $\cap_{i=1}^p \Delta^{q_i}_{g^i,h^i}$ is not empty.
 Let $\deltaghq:=\cap_{i=1}^p \Delta^{q_i}_{g^i,h^i}$ denote this nonempty semi-algebraic set and denote:
\begin{equation}\label{eq:Fqgh}
       	\Fc_{\substack{\bg,\bh}}^{\bq}
            :=  \bigcup_{(x,y) \in \deltaghq}\big(\bigtimes_{i=1}^p \Span(U_{g^i(1)}x-U_{h^i(1)}y,\dots,U_{g^i(n_i)}x-U_{h^i(n_i)}y)^{\bot}\big).
\end{equation}

We obtained that:
\begin{equation}\label{badset}
\Fc_S \subset  \bigcup_{\bg,\bh \in H_{n_1} \times \dots \times H_{n_p}}
\bigcup_{\bq\in[n_1] \times \dots \times [n_p] : \deltaghq\neq\emptyset}
\Fc_{\substack{\bg,\bh}}^{\bq}.
\end{equation}
Our goal is to show that if $p$ satisfies \Cref{eq:pTh2.8} then each $\Fc_{\substack{\bg,\bh}}^{\bq}$ is a semi-algebraic set in $\Vc^p$ of dimension strictly smaller than $pd=dim(\Vc^p)$. This guarantees that $\Fc_S$ is contained in a Zariski closed set with nonempty complement, which in turn proves \Cref{theoremmain2}.

Let $\Aghq \subseteq \mathcal{V}^{p+2}$ be the semi-algebraic set defined by:
\[
\Aghq := \{(x, y, w_1, \ldots, w_p) \in \Vc^{p+2} : (x, y) \in \deltaghq, \, S_1 w_1 = 0, \dots, S_p w_p = 0\},
\]
where
\begin{equation} \label{eq:Si}
S_i := \sum_{k=1}^{n_i} (U_{g^i(k)}x - U_{h^i(k)}y)(U_{g^i(k)}x - U_{h^i(k)}y)^T~,~i\in[p].
\end{equation}

Let $\Pi_1:\Aghq\rightarrow\Vc^2$ denote the projection onto the first two components, $\Pi_1(x,y,w_1,\ldots,w_p)=(x,y)$, and $\Pi_2:\Aghq\rightarrow\Vc^p$ the projection onto the last two components $p$, $\Pi_2(x,y,w_1,\ldots,w_p)=(w_1,\ldots,w_p)$.
 
Notice that $\mathcal{F}_{\substack{\bg, \bh}}^{\bq}=\Pi_2(\Aghq)$.

\begin{lem}\label{lem:semivb}
$(\Aghq,\Pi_1,\deltaghq)$ is a semi-algebraic vector bundle with fibers of dimension $pd-(q_1+\ldots+q_p)$ and semi-algebraic base set $\deltaghq$ of semi-algebraic dimension at most $min_{i\in[p]}\gamma^{q_i}_{g^i,h^i}-d_G-1$ . 
\end{lem}

\begin{proof}
    
First, intersection of semi-algebraic sets is semi-algebraic. Hence $\deltaghq:=\cap_{i=1}^p \Delta^{q_i}_{g^i,h^i}$ is semi-algebraic. Its semi-algebraic dimension is upper bounded by semi-algebraic dimension of each of the sets involved in the intersection, hence by their minimum. The estimate $min_{i\in[p]}\gamma^{q_1}_{g^i,h^i}-d_G-1$ follows from \Cref{dimDelta}.

At every $(x,y)\in\deltaghq$ and for each $i\in[p]$,  the symmetric matrix $S_i = \sum_{k=1}^{n_i} (U_{g^i(k)}x - U_{h^i(k)}y)(U_{g^i(k)}x - U_{h^i(k)}y)^T$ introduced in (\ref{eq:Si}) has rank $q_i$. Thus, at each $(x,y)\in\deltaghq$, the set of $(w_1,\ldots,w_p)\in\Vc^p$ such that ($x,y,w_1,\ldots,w_p)\in\Aghq$, forms a vector space of dimension $pd-(q_1+\cdots+q_p)$.

Let $G_{pd,k}=\{A\in\R^{pd\times pd}\,\vert\, A^T=A\,,\,A^2=A\,,\,trace(A)=k\}$ 
denote the Grassmannian manifold of orthogonal projections in $\Vc^p\simeq \R^{pd}$ of rank $k=q_1+\ldots+q_p$.
Consider now the mapping $f:\deltaghq\rightarrow G_{pd,k}(\R)$, $(x,y)\mapsto f(x,y)=P$ defined explicitly by $P=\bigoplus_{i=1}^p 
S_i^{\dagger}S_i$,
where $S_i^{\dagger}$ denotes the pseudo-inverse of $S_i$, 
or implicitly by $P^T=P,\,P^2=P,\,trace(P)=k$ and $P(\oplus_{i=1}^p S_i)=(\oplus_{i=1}^p S_i)P=\oplus_{i=1}^p S_i$. 
Notice that $f$ is a continuous semi-algebraic mapping and $(\Aghq,\Pi_1,\deltaghq) \simeq f^*(\xi^{\perp}_{pd,k})$, the pullback of the algebraic vector bundle $\xi^{\perp}_{pd,k}=(E_{pd,k}^\perp,\pi_{pd,k},G_{pd,k}))$, with  
$E_{pd,k}^\perp = \{(A,v)\in G_{pd,k}\times \R^{pd}\,|\,Av=0\}$ (see Proposition 12.1.4, Proposition 12.1.8, the discussion after Definition 12.7.1, and Proposition 12.7.7 in \cite{bochnak2013real}) . This establishes that $(\Aghq,\Pi_1,\deltaghq)$ is a semi-algebraic vector bundle with fibers of dimension $pd-k$. 
\end{proof}

\vspace{10mm}

\Cref{lem:semivb} admits the following two corollaries that establish \Cref{theor1} and \Cref{theorlow}.

\begin{cor}\label{cor2.4}
Assume $n=1$. Fix $\bq=(1,1,\ldots,1)\in\N^p$. For every $\bg=(g_1,\dots g_p)\in G^p$ and $\bh=(h_1,\dots,h_p)\in G^p$ the following hold true:
\begin{enumerate}
\item The semi-algebraic base set $\deltaghq$ equals $\Bc$ defined in \cref{eq:Gamma}:
\[\hspace{-5mm} \deltaghq = \Bc:=\{(x,y) \in W_G^{\bot}\cap S_1(\Vc \times \Vc) : x \nsim y \} = S_1(W_G^\perp)\setminus \cup_{g\in G}\{(x,U_gx)~,~x\in\Vc\}. \] 
\item $(\Aghq,\Pi_1, \deltaghq=\Bc)$  is a semi-algebraic vector bundle with fibers of dimension $p(d-1)$ and semi-algebraic base set $\Bc$ of semi-algebraic dimension at most $2d-d_G-1$.
\item The total space $\Aghq$ is a semi-algebraic set of semi-algebraic dimension at most $\dim\,\Aghq=2d-d_G-1+p(d-1)$.
\end{enumerate}
\end{cor}
\begin{proof}(\Cref{cor2.4})
    This corollary is a special case of \Cref{lem:semivb} in the case $n=1$ which requires $q_1=\cdots=q_p=1$ and has upper bounds $\gamma^{q_i}_{g^i,h^i}\leq 2d$.
\end{proof}

\begin{cor}\label{cor2.5}
Let $\tilde{M} \subset \Bc$ be the semi-algebraic set of dimension at most $2k$ introduced in equation (\ref{eq:Mtilde}). 
 Let
\begin{align*}
\textbf{A}^{\tilde{M}}_{\bg,\bh} = \{(x, y, w_1, \ldots, w_p) \in \Vc^{p+2} : (x, y) \in \tilde{M}, \\ \ip{U_{g_1}x-U_{h_1}}{w_1}  = 0, \dots, \ip{U_{g_{p}}x-U_{h_{p}}}{w_p} = 0\}.
\end{align*}
The following hold true:
\begin{enumerate}
\item $\tilde{M}$ is invariant under the group action, i.e., $\forall g\in G$ and $(x,y)\in\tilde{M}$, $(U_g x, U_g y)\in \tilde{M}$. 
\item $(\textbf{A}^{\tilde{M}}_{\bg,\bh},\Pi_1, \tilde{M})$  is a semi-algebraic vector bundle with fibers of dimension $p(d-1)$ and semi-algebraic base set $\tilde{M}$ of semi-algebraic dimension at most $2k$.
\item The total space $\textbf{A}^{\tilde{M}}_{\bg,\bh}$ is a semi-algebraic set of semi-algebraic dimension at most $\dim\,\textbf{A}^{\tilde{M}}_{\bg,\bh}=2k+p(d-1)$.
\end{enumerate}
\end{cor}
\begin{proof}(\Cref{cor2.5})

If $\tilde{M}=\emptyset$ then the conclusion follow trivially since semialgebraic dimension of the empty set is -1 by convention. 

Assume now that $\tilde{M}\neq\emptyset$.

Part 1. follows from the fact that $M$ is $U_g$ invariant for every $g$. 

Parts 2 and 3 follow by restricting the semi-algebraic vector bundle 
$(\Aghq,\Pi_1, \deltaghq=\Bc)$ from Corollary \ref{cor2.4}, to the semi-algebraic set $\tilde{M}\subset \Bc$. 
\end{proof}

{\bf Proof of \Cref{theor1}}

    From \Cref{cor2.4} we have that $\Aghq$ is a semi-algebraic set of dimension  at most $2d-d_G-1+ p(d-1)$. 
    From \Cref{dimproject} we have that the set
    $\Fc_{\bg,\bh}$ 
    defined in equation (\ref{calF}) has dimension at most $2d-d_G-1+ p(d-1)$. Therefore, if $p\geq 2d-d_G$ then
    $\dim(\Fc_{\bg,\bh})<pd$. This concludes the proof of \Cref{theor1}. $\Box$
    
\vspace{3mm}

{\bf Proof of \Cref{theorlow}}

From \Cref{cor2.5} the set $\textbf{A}^{\tilde{M}}_{\bg,\bh}$ is a semialgebraic set of dimension at most $2k+ p(d-1)$. 
From \Cref{dimproject} we have that the set
$\mathcal{S}_{\bg,\bh}$ 
defined in equation (\ref{eq:14}) has dimension at most $2k+ p(d-1)$. Therefore, if $p\geq 2k+1$ then $\dim(\mathcal{S}_{\bg,\bh})<pd$.
This concludes the proof of \Cref{theorlow}.
$\Box$    
\vspace{3mm}

{\bf Proof of \Cref{theoremmain2}}

Part a. 
\Cref{lem:semivb} implies that $\Aghq$ has semi-algebraic dimension at most 
\begin{equation} \label{eq:m}
m(\bg,\bh,\bq):=pd-(q_1+\ldots+q_p)+
min_{i\in[p]}\gamma^{q_i}_{g^i,h^i}-d_G-1. 
\end{equation}
In turn, from \Cref{dimproject} $\mathcal{F}_{\substack{\bg, \bh}}^{\bq}=\Pi_2(\Aghq)$ has semi-algebraic dimension upper bounded by the same $m(\bg,\bh,\bq)$. The proof of \Cref{theoremmain2} is complete if:
\[ \max_{\begin{array}{c}
g^1\in H_{n_1},\ldots,g^p\in H_{n_p} \\
h^1\in H_{n_1},\ldots,h^p\in H_{n_p} \\
q_1\in[n_1],\ldots,q_p\in[n_p]
\end{array} }
 m(\bg,\bh,\bq)<pd \]
Or, more explicitely, if
\ignore{
\[ \max_{\begin{array}{c}
g^1\in H_{n_1},\ldots,g^p\in H_{n_p} \\
h^1\in H_{n_1},\ldots,h^p\in H_{n_p} \\
q_1\in[n_1],\ldots,q_p\in[n_p]
\end{array} }
pd-(q_1+\ldots+q_p)+
min_{i\in[p]}\gamma^{q_i}_{g^i,h^i}-d_G-1  <pd \]

Or

\[ \max_{\begin{array}{c}
g^1\in H_{n_1},\ldots,g^p\in H_{n_p} \\
h^1\in H_{n_1},\ldots,h^p\in H_{n_p}
\end{array} }
\left(
\max_{
q_1\in[n_1],\ldots,q_p\in[n_p]
}\left(
min_{i\in[p]}\gamma^{q_i}_{g^i,h^i} -(q_1+\ldots+q_p) \right)\right)
\leq d_G \]
which is precisely equation(\ref{eq5new}), 

or
}
\[ 
\max_{
q_1\in[n_1],\ldots,q_p\in[n_p]
}
\left(
\max_{\begin{array}{c}
g^1\in H_{n_1},\ldots,g^p\in H_{n_p} \\
h^1\in H_{n_1},\ldots,h^p\in H_{n_p}
\end{array} }
\left(
min_{i\in[p]}\gamma^{q_i}_{g^i,h^i} \right) -(q_1+\ldots+q_p) \right)
\leq d_G. \]
For two functions $f_1:A_1\rightarrow\R$, $f_2:A_2\rightarrow\R$, with $A_1,A_2$ finite sets, $\max_{x\in A_1,y\in A_2} \min(f_1(x),f_2(y)) = \min(\max_{x\in A_1}f_1(x),\max_{y\in A_2}f_2(y))$. This result extends to $p$ functions as well, which implies:
\[\hspace{-8mm}
\max_{\begin{array}{c}
g^1\in H_{n_1},\ldots,g^p\in H_{n_p} \\
h^1\in H_{n_1},\ldots,h^p\in H_{n_p}
\end{array} }
\left(
min_{i\in[p]}\gamma^{q_i}_{g^i,h^i} \right) = 
min_{i\in[p]}
\left(
\max_{\begin{array}{c}
g^1\in H_{n_1},\ldots,g^p\in H_{n_p} \\
h^1\in H_{n_1},\ldots,h^p\in H_{n_p}
\end{array} }
\gamma^{q_i}_{g^i,h^i}
\right) = \min_{i\in[p]}\rho_{n_i}(q_i).
\]
This establishes (\ref{eq5modified}).

Part b. Now assume that $n_1=\dots=n_p=n$.
In this case we want to bound from above the following term:
\begin{align*}
\max_{\begin{array}{c}
g^1,\ldots,g^p\in H_{n}^p \\
h^1,\ldots,h^p\in H_{n}^p
\end{array} }
\left(
\max_{
q_1,\ldots,q_p\in[n]^p
}\left( pd-(q_1+\ldots+q_p)+
min_{i\in[p]}\gamma^{q_i}_{g^i,h^i}-d_G-1  \right)\right)
\end{align*}
Without loss of generality, after permuting  $q_1, \dots, q_p$,  we can assume that  $q_1 \leq \dots \leq q_p$.

Then
\begin{align*}
& 
\max_{\begin{array}{c}
g^1,\ldots,g^p\in H_{n}^p \\
h^1,\ldots,h^p\in H_{n}^p
\end{array} }
\left(
\max_{
q_1,\ldots,q_p\in[n]^p
}\left( pd-(q_1+\ldots+q_p)+
min_{i\in[p]}\gamma^{q_i}_{g^i,h^i}-d_G-1  \right)\right)  \le 
 \\ & \le 
\max_{\begin{array}{c}
g^1,\ldots,g^p\in H_{n}^p \\
h^1,\ldots,h^p\in H_{n}^p
\end{array} }
\left(
\max_{q_1\in[n]}
\left( p(d-q_1)+\gamma^{q_1}_{g_1,h_1} -d_G-1 \right)\right)  \le \\
\le & \max_{
g,h\in H_{n}}
\left(
\max_{
q\in[n]
}\left( p(d-q)+\gamma^{q}_{g,h} -d_G-1 \right)\right).
\end{align*}
On the other hand, the reverse inequality is trivial:
\begin{align*}
\max_{\begin{array}{c}
g^1,\ldots,g^p\in H_{n}^p \\
h^1,\ldots,h^p\in H_{n}^p
\end{array} }
\left(
\max_{
q_1,\ldots,q_p\in[n]^p
}\left( pd-(q_1+\ldots+q_p)+
min_{i\in[p]}\gamma^{q_i}_{g^i,h^i}-d_G-1  \right)\right) \ge \\ \ge \max_{
g,h\in H_{n}}
\left(
\max_{
q\in[n]
}\left( p(d-q)+\gamma^{q}_{g,h} -d_G-1 \right)\right) 
\end{align*}
because the right hand-side is obtained as a special case, inside the search space of the left hand-side, for $g^1=\cdots=g^p=g$, $h^1=\cdots=h^p=h$ and $q_1=\cdots=q_p=q$. Therefore
\begin{align*}
\max_{\begin{array}{c}
g^1,\ldots,g^p\in H_{n}^p \\
h^1,\ldots,h^p\in H_{n}^p
\end{array} }
\left(
\max_{
q_1,\ldots,q_p\in[n]^p
}\left( pd-(q_1+\ldots+q_p)+
\max_{i\in[p]}\gamma^{q_i}_{g^i,h^i}-d_G-1  \right)\right) = \\ = \max_{
g,h\in H_{n}}
\left(
\max_{
q\in[n]
}\left( p(d-q)+\gamma^{q}_{g,h} -d_G-1 \right)\right).
\end{align*}

Finally, the condition
\[\max_{
g,h\in H_{n}}
\left(
\max_{
q\in[n]
}\left( p(d-q)+\gamma^{q}_{g,h} -d_G-1 \right)\right) <pd  \]
is equivalent to 
\[ 
p>  \max_{q\in [n]} \frac{1}{q}\left(\max_{g,h\in H_n} \gamma_{g,h}^q -d_G-1\right).\]
$\Box$

\subsection{Additional Results}

In this subsection we derive some simple results relating the dependency of $\rho_n(q)$ on $n$ and $q$.

\begin{prop}\label{prop_rho}
Let 
$$p_n = \max_{q\in [n]} \frac{1}{q}\left( \rho_n(q) -d_G-1\right) = \max_{q\in [n]} \frac{1}{q}\left(\max_{g,h\in H_n} \gamma^q_{g,h} -d_G-1\right). $$
The following hold true:
\begin{enumerate}
    \item For every $1\leq n\leq N-1$ and $1\leq q\leq n$, 
    $$\rho_{n+1}(q)\leq \rho_n(q)\leq max(\rho_{n+1}(q),\rho_{n+1}(q+1)).$$
    \item For every $1\leq n\leq N-1$, $\rho_{n+1}(n+1)\leq \rho_n(n)$.
    \item For every $1\leq n\leq N-1$, $p_{n+1}\leq p_n$. 
    \item $p_N=\min_n p_n\leq 2d-d_G-1$.
\end{enumerate}
\end{prop}

\begin{proof}
1. For $1\leq n<N$, $1\leq q\leq n$ and $g,h\in H_n$ let $g',h'\in H_{n+1}$ be extensions of $g,h$ to $n+1$ elements, i.e., $g'{\vert}_{\{1,2,\ldots,n\}}=g$ and $h'{\vert}_{\{1,2,\ldots,n\}}=h$. Then the following  inclusion is obvious
\[ 
    \Gamma^q_{g,h} \subset \left(  \Gamma^q_{g',h'} \cup\Gamma^{q+1}_{g',h'}  \right)
\]
This establishes that $\gamma^q_{g,h}\leq max(\gamma^q_{g',h'},\gamma^{q+1}_{g',h'})$. Taking the maximum over $g,h\in H_n$ we obtain $\rho_n(q)\leq max(\rho_{n+1}(q),\rho_{n+1}(q+1))$.

Let $1\leq n<N$, $1\leq q\leq n$ and $g',h'\in H_{n+1}$. Denote by $S\subset H_n\times H_n$ the set of pairs $(g,h)$ that are restrictions of $g',h'$ to any subset of $n$ entries, i.e., $g=g'{\vert}_I$ and $h=h'{\vert}_I$ for some $I\subset [n+1]$ with $|I|=n$. 
Then the following inclusion is obvious
    \[ 
    \Gamma^q_{g',h'} \subset \cup_{(g,h)\in S} \Gamma^q_{g,h}
    \]
This establishes that $\gamma^q_{g',h'}\leq max_{(g,h)\in S} \gamma^q_{g,h}$. Taking the maximum over $g',h'\in H_{n+1}$ we obtain that $\rho_{n+1}(q)\leq \rho_{n}(q)$.
    This completes the proof of part 1.

2. Let $1\leq n<N$ and $g',h'\in H_{n+1}$. Denote by $g,h\in H_n$ the restriction of $g',h'$ to the first $n$ entries. 
Then every $(x,y)\in\Gamma^{n+1}_{g'h'}$ is also in $\Gamma^n_{g,h}$ since $\{U_{g'(1)}x-U_{h'(1)}y,\ldots,U_{g'(n+1)}x-U_{h'(n+1)}y\}$ is linearly independent. Thus $\gamma^{n+1}_{g',h'}\leq \gamma^n_{g,h}$. Taking the maximum over $g',h'$ we obtain $\rho_{n+1}(n+1)\leq \rho_n(n)$. 

3. This follows from 1 and 2 immediately.

4. Since every $\Gamma^q_{g,h}$ is a subset of $\Vc\times \Vc$ it follows $\gamma^q_{g,h}\leq 2d$ and the conclusion follows.
    
\end{proof}

\section{Application to planar rotations \label{sec4}}

In this section, we analyze the representation of the abelian group $G=\Z_N=\{0,1,\ldots,N-1\}$ identified with a subgroup of $SO(2)$, namely of planar (2D) rotations of multiples of $a:=\frac{2\pi}{N}$.


Let $d=2$, $\Vc=\R^2$, and let $U_b$ denote the counterclockwise rotation by $b$ radians, i.e, 
\[ U_{b}= 
\left[ 
\begin{array}{cc}
cos(b) & -sin(b) \\
sin(b) & cos(b)
\end{array}
\right] .\]   For a fixed $N \in \N$, let $a=\frac{2\pi}{N}$. Then $\Z_N$ is isomorphic to the group $\langle U_a \rangle= 
\{1, U_a, U_a^2, \dots, U_a^{N-1}\}$, the abelian group generated by $U_a$, with $1=U_0=U_a^0=U_a^N=U_{2\pi}$, the identity matrix.

Our task is to analyze Euclidean embeddings of $\widehat{\R^2}=\R^2/G$ produced by the sorted coorbit construction introduced in \Cref{sortedcoorbit}. First we present a geometric embedding of $\widehat{\R^2}$ using a "gluing" method. Then we show the following results.
\begin{enumerate}
   \item Any single full coorbit cannot produce an injective embedding.
   \item Any coorbit embedding in $\R^2$ using only two measurements (even from two different coorbits) cannot be injective.     
    \item An injective embedding in $\R^3$ is possible using only two windows $w_1,w_2$ and a total of three measurements from their coorbits.
   \item We compute explicitly $\rho_1(q)$ and $\rho_2(q)$ from \Cref{eq:rhonq} and show that Theorem \ref{theoremmain2} provides embeddings in $\R^4$ with only three windows.
\end{enumerate}

\subsection{A geometric Euclidean embedding of the quotient space $\widehat{\R^2}$}

First, the metric space $\widehat{\R^2}$ admits a natural 3D Euclidean embedding by identifying the x-semiaxis $[0,\infty)\times\{0\}$ with the 
slanted semi-line $\{r(cos(a),sin(a)):r\geq 0\}$. We obtain a 2D-cone (see \Cref{fig:cone_embedding}) that embeds in $\R^3$ via:
\begin{equation}
    \label{eq:cone}
\hspace{-5mm}    \Psi:\R^2\rightarrow\R^3~,~
    (x,y) \mapsto \Psi(x,y)=\left(\frac{r}{N}cos(N\theta),\frac{r}{N}sin(N\theta),r\sqrt{1-\frac{1}{N^2}}\right)
\end{equation}
 where $r=|x+iy|=\sqrt{x^2+y^2}$ and $\theta=Arg(x+iy)$.
 The distance $\dd$ of the metric space $(\widehat{\R^2},\dd)$ is given by 
 \[\dd((x_1,y_1),(x_2,y_2))=\sqrt{r_1^2+r_2^2-2r_1r_2cos(min(|\theta_1-\theta_2|,\frac{2\pi}{N}-|\theta_1-\theta_2|))},\] with $r_k=|x_k+iy_k|$, $\theta_k=Arg(x_k+iy_k)\,(mod)\frac{2\pi}{N}$, $k=1,2$. 
 The map $\Psi$ is bi-Lipschitz on $\widehat{\R^2}$ with
 \begin{equation}
     \frac{1}{N^2sin(\frac{\pi}{2N})^2} \dd((x_1,y_1),(x_2,y_2))^2 \leq \norm{\Psi(x_1,y_1)-\Psi(x_2,y_2)}_2^2 \leq \dd((x_1,y_1),(x_2,y_2))^2
 \end{equation}

\begin{figure}[ht]
    \centering
    \begin{tikzpicture}
    \begin{axis}[
        view={45}{25},
        axis lines=center,
        ticks=none,
        xlabel={$x$}, ylabel={$y$}, zlabel={$z$},
        samples=25,
        domain=0:1,
        y domain=0:360,
        z buffer=sort,
        colormap/blackwhite,
        shader=flat,
        point meta=none
    ]
    \addplot3[
        surf,
        mesh/ordering=y varies,
        draw=black,
        fill=none
    ]
    ({x*cos(y)}, {x*sin(y)}, {x});
    \end{axis}
    \end{tikzpicture}
    \caption{Visualization of the cone embedding $\Psi(x,y)$ described by $\left(\frac{r}{N}\cos(N\theta), \frac{r}{N}\sin(N\theta), r\sqrt{1 - \frac{1}{N^2}}\right)$.}
    \label{fig:cone_embedding}
\end{figure}

Note for large $N$, the distortion $\frac{b_0}{a_0}=Nsin(\frac{\pi}{2N})\rightarrow\frac{\pi}{2}\approx 1.57$. On the other hand if $G=SO(2)$, then $(x,y)\mapsto \Psi_3(x,y)=\sqrt{x^2+y^2}$ provides an isometric embedding of $\R^2/SO(2)$ onto $[0,\infty)\subset\R$.
For finite $N$, a lower-dimensional bi-Lipschitz embbedding is obtained by dropping the third coordinate in $\Psi$, that is, $(x,y)\mapsto (\frac{r}{N}cos(N\theta),\frac{r}{N}sin(N\theta))$. This provides an embedding of $\widehat{\R^2}=\R^2/\Z_N$ into $\R^2$ (a lower dimensional space), at the expense of a much larger distortion, namely $N$.

\subsection{Sorted coorbits}

Now we derive closed form expressions for sorted coorbits. 
We shall use interchangeably cartesian and polar coordinates expressed via complex numbers. Let $[w],[x]\in\widehat{\R^2}$. For these equivalence classes we choose representatives with polar angles in $[0,\frac{2\pi}{N})$: $w = |w|e^{i\phi}$, $x = |x|e^{i\theta}$, where $|w|=\norm{w}$ and $|x|=\norm{x}$ denote their Eucldean norms (or polar radia), and $\phi,\theta\in[0,\frac{2\pi}{N})$.

The sorted coorbit is described in the following result.

\begin{prop}\label{sortedcoobit}
    Let $w = |w|e^{i\phi}$, $x = |x|e^{i\theta}$, with $\phi,\theta\in[0,a)$ and $a:=\frac{2\pi}{N}$. The sorted coorbit $\Phi_w(x):=\downarrow(\ip{x}{U_a^k w})_{k\in[N]}\in\R^N$ is given by:
\begin{equation}\label{eq:phiw}
    \Phi_w(x) = |x|\,|w|\,cos(\an)e_0 + |x|\,|w|\,sin(\an) f_0
\end{equation}
where $\an = \rro_{\phi}(\theta):=min(|\theta-\phi|,a-|\theta-\phi|)$ and
\begin{eqnarray}
\hspace{-5mm} e_0 & = & \left[ 
\begin{array}{ccccccc}
1 & cos(a) & cos(a) & cos(2a) & cos(2a) & \cdots & cos(pa)
\end{array}
\right]^T  \\
\hspace{-5mm} f_0 & = & \left[ 
\begin{array}{ccccccc}
0 & sin(a) & -sin(a) & sin(2a) & -sin(2a) & \cdots & (-1)^N sin(pa)
\end{array}
\right]^T 
\end{eqnarray}
with $p=\lfloor \frac{N}{2}\rfloor$. More explicitly,
\begin{equation}\label{sortedcoobitfinal}
    (\Phi_w(x))_{2k+1} = |x|\,|w|\,cos(\an+ka)~,~
    (\Phi_w(x))_{2k+2} = |x|\,|w|\,cos((k+1)a-\an)
\end{equation}
for $k=0,1,\ldots$. More compactly,
\begin{equation}
    \label{eq:onesort}
    (\Phi_w(x))_q = |x|\,|w|\,cos(t-(-1)^q\lfloor\frac{q}{2}\rfloor a)~~,~~q=1,2,\ldots,N
\end{equation}

The two vectors $e_0$ and $f_0$ are orthogonal to each other and of same norm 
$\sqrt{\frac{N}{2}}$.

The $N$-vector $\Phi_w(x)$ belongs to the 2-dimensional vector space spanned by $e_0,f_0$. The range of map $\Phi_w$ as $\theta$ runs through $[0,a)$ and $|x|\geq 0$ is a double cover of the 2D convex cone inside the linear space $span\{e_0,f_0\}$ between semi-lines $\R_{+}e_0$ and $\R_{+}(cos(\frac{\pi}{N})e_0+sin(\frac{\pi}{N})f_0)$.  
\end{prop}

\begin{proof}

Define $\ddn = \theta - \phi \in (-a, a)$. The unsorted coorbit vector $v=(\ip{x}{U_a^k w})_{k\in[N]}$ is given by:
\[
v_k = |x||w| \cos(\ddn - ka), \quad k = 0,\ldots,N-1.
\]
Let $\an = \min(|\ddn|, a - |\ddn|)=\rro_\phi(\theta) \in [0,a/2]$.

    \textbf{Case 1:} $\ddn \in [0, a/2]$, so $\an = \ddn=\theta-\phi$. Then $\{ cos(\ddn - ka) \mid  k=0,\ldots, N-1\} = \{cos( ka - \an) \mid k = 0,\ldots,N-1\} = 
    \{cos(\beta_k)\mid k=0,1,\ldots,N-1\}$, 
    where $\beta_k = \min( ka - \an, 2\pi - ( ka-\an)) \in [0,\pi]$.
\begin{itemize}
    \item For $k\leq \frac{\pi-\an}{a}$, $\beta_k = ka - \an$.
    \item For $k>\frac{\pi-\an}{a}$, $\beta_k = 2\pi - (ka - \an)$.
\end{itemize}
Sorting $\beta_k$ in increasing order (since cosine function is monotone decreasing on $[0,\pi]$) yields the sequence $\an, a - \an, a+ \an  , 2a - \an, \ldots$.

\textbf{Case 2:} $\ddn \in [-\frac{a}{2}, 0)$, so $\an =  -\ddn$. Then $\{cos(\ddn-ka)\mid k=0,1,\ldots,N-1\} = \{cos(\an+ka)\mid k=0,\ldots,N-1\} = \{cos(\lambda_k)\mid k=0,\ldots,N-1\}$. 

  where $\lambda_k = \min(\an + ka, 2\pi - (\an + ka)) \in [0,\pi]$.
\begin{itemize}
    \item For $k\leq \frac{\pi-\an}{a}$, $\lambda_k = \an + ka$.
    \item For $k>\frac{\pi-\an}{a}$, $\lambda_k = 2\pi - (\an + ka)$.
\end{itemize}
Sorting $\lambda_k$ in increasing order  yields the sequence $\an, a - \an, a+ \an  , 2a - \an, \ldots$.

\textbf{Case 3:} $\ddn \in (a/2, a)$, so $\an = a - \ddn$. Then $\{\cos(\ddn-ka)\mid k=0,1,\ldots,N-1\} = \{\cos(a-\an-ka)\mid k=0,\ldots,N-1\} = \{\cos(ka-(a-\an))\mid k=0,\ldots,N-1\} = \{\cos(\zeta_k)\mid k=0,1,\ldots,N-1\}$,
where $\zeta_k = \min( |ka - (a-\an)|, 2\pi - |ka - (a-\an)|) \in [0,\pi]$.
\begin{itemize}
    \item This is the same set of cosine values as in Case 1, with $\an$ replaced by $a-\an$.
    \item The set of corresponding angles in $[0, \pi]$ is $\{a-\an, \an, a+\an, 2a-\an, \dots \}$, which is a permutation of the set of angles from Case 1.
\end{itemize}
Sorting the set of angles $\{\zeta_k\}$ in increasing order yields the sequence $\an, a - \an, a+ \an , 2a - \an, \ldots$.

\textbf{Case 4:} $\ddn \in (-a, -a/2)$, so $\an = a + \ddn$. Then $\{\cos(\ddn-ka)\mid k=0,1,\ldots,N-1\} = \{\cos(\an-a-ka)\mid k=0,\ldots,N-1\} = \{\cos(ka+a-\an)\mid k=0,\ldots,N-1\} = \{\cos(\xi_k)\mid k=0,1,\ldots,N-1\}$,
where $\xi = \min( |(k+1)a - \an|, 2\pi - |(k+1)a - \an|) \in [0,\pi]$.
\begin{itemize}
    \item Since $k$ runs from $0$ to $N-1$, $k+1$ runs from $1$ to $N$. The set of values $\{\cos((k+1)a - \an)\}$ is the same as $\{\cos(ka - \an)\}$ due to the $N$-periodicity in $k$ of the cosine values.
    \item The set of corresponding angles in $[0, \pi]$ is therefore identical to the set from Case 1.
\end{itemize}
Sorting the set of angles $\{\xi_k\}$ in increasing order yields the sequence $\an, a - \an, a+ \an , 2a - \an, \ldots$.

In all these cases we obtain that, sorted list
$\{cos(\ddn -ka)\mid k=0,\ldots,N-1\}$ turns into $\left(cos(\an),cos(a-\an),cos(a+\an),cos(2a-\an),\ldots
\right)$. This proves \Cref{sortedcoobitfinal} and \Cref{eq:phiw}.

Next we prove the metric properties of $e_0$ and $f_0$.

\textbf{Case 1: \(N\) even}, $N=2p$

\begin{align*}
\|\mathbf{f}_0\|^2 &= 0 + \sum_{l=1}^{p-1} 2 \sin^2(la) + 0 
= \sum_{l=1}^{p-1} 2 \cdot \frac{1 - \cos(2la)}{2} 
= \sum_{l=1}^{p-1} (1 - \cos(2la)) \\
&= \sum_{l=1}^{p-1} 1 - \sum_{l=1}^{p-1} \cos\left( \frac{4\pi l}{N} \right) 
= (p - 1) - (-1) = p = \frac{N}{2}
\end{align*}

\textbf{Case 2: \(N\) odd}, $N=2p+1$

\begin{align*}
\|\mathbf{f}_0\|^2 &= 0 + \sum_{l=1}^{p} 2 \sin^2(la) 
= \sum_{l=1}^{p} 2 \cdot \frac{1 - \cos(2la)}{2} 
= \sum_{l=1}^{p} (1 - \cos(2la)) \\
&= \sum_{l=1}^{p} 1 - \sum_{l=1}^{p} \cos\left( \frac{4\pi l}{N} \right)
= p - \left( -\frac{1}{2} \right) = p + \frac{1}{2} = \frac{N}{2}
\end{align*}

In the inner product $\bm{e}_0 \cdot \bm{f}_0 = \sum_{j=1}^{N} (\bm{e}_0)_j (\bm{f}_0)_j$ each two consecutive terms cancel each other, 2 with 3, 4 with 5, and so on. The first product also cancels. The only remaining possible term is when $N$ is even. Then the last term $j=N=2p$ satisfies $\cos(pa) \sin(pa) = \frac{1}{2}\sin(2pa) = \frac{1}{2}\sin(2\pi) = 0$.
Thus, $\bm{e}_0 \cdot \bm{f}_0 = 0$ are orthogonal.

Next, a direct computation shows $\norm{e_0}^2+\norm{f_0}^2=N$. Thus $\norm{e_0}=\norm{f_0}=\sqrt{\frac{N}{2}}$. 
These prove the metric properties of $e_0$ and $f_0$.

Finally, \Cref{eq:phiw} proves that  the image of the angular cone of angle $a$ under $\Phi_w$ is the 2D cone inside $span(e_0,f_0)$ spanned between semi-lines $\R_+ e_0$ and $\R_0(cos(\frac{a}{2}) e_0)+sin(\frac{a}{2})f_0)$. 
To show that the image is a double cover of this cone, except for the boundary lines, notice the following symmetry. Assume $\theta\in[0,a)$ is so that $t=min(|\theta-\phi|,a-|\theta-\phi|)\not\in\{0, \frac{a}{2}\}$. Then there exists a unique $\theta'\in[0,a)$ so that $\theta'\neq\theta$ and $min(|\theta'-\phi|,a-|\theta'-\phi|) = t$. 

This is shown as follows. If $0<t=|\theta-\phi|\leq\phi\leq\frac{a}{2}$ then choose $\theta'=2\phi-\theta\in[0,a)$. If $t=|\theta-\phi|\leq \phi$ but $\phi>\frac{a}{2}$  then choose either $\theta' =2\phi-\theta$ or $\theta'=\phi+t-a$, whichever satisfies $\theta'\in[0,a)$. If  $\phi<t=|\theta-\phi|\leq\frac{a}{2}$ then choose $\theta'=a+2\phi-\theta$. The remaining cases are treated similarly. Alternatively, the plot of the function $\theta\mapsto \rro_\phi(\theta):=min(|\theta-\phi|,a-|\theta-\phi|)$ depicted in Figure \ref{figmin} for $a=\frac{\pi}{10}$ and $\phi=\frac{\pi}{40}$ shows that there are exactly two distinct solutions $\theta,\theta'$ on $[0,a)$ for each value of $t\in(0,\frac{a}{2})$.

\begin{figure}
    \centering
    \includegraphics[width=1\linewidth]{circular_distance_plot.png}
    \caption{The plot of the function $\theta\mapsto \rro_\phi(\theta):=min(|\theta-\phi|,a-|\theta-\phi|)$ for $a=\frac{\pi}{10}$ and $\phi=\frac{\pi}{40}$.}
    \label{figmin}
\end{figure}

\end{proof}

\subsection{Injective embeddings using sorted coorbits}

\begin{prop}

Consider the sorted coorbit maps $\Phi_{w,S}$ and $\Phi_{(w_1,w_2),S}$ associated to the finite group of planar rotations $\Z_N\simeq\langle U_a \rangle =\{1,U_a,\ldots,U_a^{N-1}\}$ acting on $\Vc=\R^2$, with $N\geq 2$, $a=\frac{2\pi}{N}$, and $w,w_1,w_2\in\Vc$, $S\subset[N]$ or $S\subset[N]\times\{1,2\}$ 
\begin{enumerate}
    \item Let $S = \{(1,1), (2,1), (3,1), \dots, (N,1)\}$. For any $w\in\Vc$ the map $\hat{\Phi}_{w,S}:\hat{\Vc}\rightarrow\R^N$ is not injective.
    \item For any $w_1,w_2\in\Vc$ and $q_1,q_2\in[N]$, the map $\hat{\Phi}_{(w_1,w_2),S}:\hat{\Vc}\rightarrow\R^2$ is not injective, where  $S=\{(q_1,1),(q_2,2)\}$.
\end{enumerate}

\end{prop}

\begin{proof}
\mbox{}

1. Assume $w\neq 0$, otherwise the conclusion is immediate. 

According to \Cref{sortedcoobit} the range of the map $\Phi_w$ is a double cover of the 2D cone spanned between semilines $\R_0 e_0$ and $\R_0(cos(\frac{a}{2}e_0 + sin(\frac{a}{2})f_0)$. Hence, the quotient map cannot be injective. 

An explicit proof can be given as follows. 
Take $x = U_{a/3}w$ and $y = U_{-a/3}w$. 
Note that $x \nsim y$. On the other hand
\[ \{ \ip{x}{U_a^k w}, k\in[N] \} = \left\{ \norm{w}^2 cos(\frac{2\pi k}{N}-\frac{2\pi }{3N}),k\in[N]\right\} =\]
\[ =  \left\{ \norm{w}^2 cos(\frac{2\pi k}{N}+\frac{2\pi }{3N}),k\in[N]\right\} =\{ \ip{y}{U_a^k w}, k\in[N] \} \]
Hence $\Phi_{w,S}(x)=\Phi_{w,S}(y)$. This proves that $\hat{\Phi}_{w,S}$ is not injective.

2. 
Without loss of generality assume that $w_1$ and $w_2$ are linearly independent. Otherwise the statement is trivial.

Choose representatives $w_1$ and $w_2$ from $[w_1]$ and $[w_2]$ as in \Cref{sortedcoobit} that have associated angles $\phi_1,\phi_2\in[0,a)]$: $w_1 = \norm{w_1}U_{\phi_1}e_1$ and $w_2= \norm{w_2}U_{\phi_2}e_1$, where $e_1=(1,0)^T$. 
Our task is to construct $[x]\neq [y]$ so that $\Phi_{(w_1,w_2),S}(x)=\Phi_{(w_1,w_2),S}(y)$. 

Let $k_1=-(-1)^{q_1}\lfloor \frac{q_1}{2}\rfloor$ and $k_2=-(-1)^{q_2}\lfloor \frac{q_2}{2}\rfloor$ and denote
\[ R_{k_1,k_2}(\theta) = \frac{cos(\rro_{\phi_1}(\theta)+k_1 a)}{cos(\rro_{\phi_2}(\theta)+k_2 a)} \]

We claim that there are $\theta_x,\theta_y\in[0,a)$, with $\theta_x\neq\theta_y$, so that 
\begin{equation} \label{eq:cos}
R_{k_1,k_2}(\theta_x) = R_{k_1,k_2}(\theta_y)\neq 0
\end{equation}
with both sides finite.

Assume the claim is true. Then set
\[ 
x= \frac{1}{cos(\rro_{\phi_1}(\theta_x)+k_1a)}U_{\theta_x}e_1~~,~~
y= \frac{1}{cos(\rro_{\phi_1}(\theta_y)+k_1a)}U_{\theta_y}e_1
\]
and notice that $[x]\neq[y]$, $\Phi_{w_1,q_1}(x)=\norm{w_1}=\Phi_{w_2,q_2}(y)$ and $\Phi_{w_2,q_2}(x)=\frac{\norm{w_2}}{R_{k_1,k_2}(\theta_x)}=\Phi_{w_2,q_2}(y)$. Hence $\Phi_{(w_1,w_2),S}(x) = \Phi_{(w_1,w_2),S}(y)$ whichs proves that $\hat{\Phi}_{w,S}$ is not injective.

It remains to prove the claim stated in equation (\ref{eq:cos}).

Case 1. $q_1=q_2=q$.

Let $\theta_x=\frac{1}{2}(\phi_1+\phi_2)$ and $\theta_y= \theta_x + \eps \frac{a}{2}$, where
$ \eps=+1$ if $\theta_x\in[0,\frac{a}{2})$, and $\eps = -1$ if $\theta_x\in[\frac{a}{2},a)$.  See figure \ref{figmin1} for an illustration of these angles.

Notice that $|\theta_x - \phi_1|=\frac{1}{2}|\phi_2 - \phi_1|=|\theta_x - \phi_2|$. Hence $\rro_{\phi_1}(\theta_x) = \rro_{\phi_2}(\theta_x)$. 
Then notice that $\rro_\phi(\theta)+\rro_\phi(\theta+\frac{a}{2}) = \frac{a}{2}$ for all $\theta\in[0,\frac{a}{2})$. This shows that, additionally, $\rro_{\phi_1}(\theta_y)=\rro_{\phi_2}(\theta_y)$.
In particular $(\theta_x,\theta_y)$ satisfy \Cref{eq:cos} with $R_{k_1,k_2}(\theta_x)=R_{k_1,k_2}(\theta_y)=1$.

\begin{figure}
    \centering
    \includegraphics[width=1\linewidth]{rho_intersection_plot.png}
    \caption{The plot of two functions $\rro_{\phi_1}$ and $\rro_{\phi_2}$, for $a=\frac{\pi}{10}$, $\phi_1=\frac{\pi}{30}$ and $\phi_2=\frac{\pi}{40}$, where $\rro_{\phi}(\theta):=min(|\theta-\phi|,a-|\theta-\phi|)$. Notice the two plots intersect at $\theta_x=\frac{1}{2}(\phi_1 + \phi_2)=\frac{7\pi}{240}$ and $\theta_y = \theta_x+\frac{a}{2} = \frac{19\pi}{240}$.}
    \label{figmin1}
\end{figure}

Case 2. $q_1\neq q_2$.

Notice that  $R_{k_1,k_2}(\theta)=R_{k_1,k_2}(\theta+a)$, that is, $R_{k_1,k_2}(\theta)$ is $a$-periodic. If $k_2$ is so that $\cos(\rro_{\phi_2}(\theta)+k_2 a)$ does not vanish on $[0,a]$, then the $a$-periodic function $R_{k_1,k_2}(\theta)$ is continuous. Thus the claim follows.

Similarly, if $k_1$ is so that  $\cos(\rro_{\phi_1}(\theta)+k_1 a)$ does not vanish on $[0,a]$, then the $a$-periodic function $\frac{1}{R_{k_1,k_2}(\theta)}$ is continuous and not constant. Thus the claim follows.

Lastly, if the numerator and denominator have a zero on $[0,a]$, then this only happens if  
$I_1=[k_1a, k_1a + \frac{a}{2}]$ and $I_2=[k_1a, k_1 a + \frac{a}{2}]$ contain
$ \frac{\pi}{2}$ or $-\frac{\pi}{2}$. Since $k_1\neq k_2$, $\frac{\pi}{2}$ is in one interval, and $-\frac{\pi}{2}$ should be in the other interval. 
This condition cannot be satisfied if $N=4r+1$ or $N=4r+3$. In the case $N=4r$, $k_1,k_2$  must satisfy $k_1=-k_2=\pm \frac{N}{4}$. Then $k_1a = -k_2a = \pm \frac{\pi}{2}$ and 
$R_{k_1,k_2}(\theta) = -\frac{sin(\rro_{\phi_1}(\theta))}{sin(\rro_{\phi_2}(\theta))}$.

In the case $N=4r+2$, $k_1,k_2\in\{ -\frac{N+2}{4},\frac{N-2}{4} \}$. Then $k_1a,k_2a\in\{-\frac{\pi}{2}-\frac{\pi}{N},\frac{\pi}{2}-\frac{\pi}{N}\}$. Using the fact that $\rro_{\phi}(\theta)+\rro_{\phi}(\theta+\frac{a}{2})=\frac{a}{2}$ (see \Cref{figmin}) it follows
that $R_{k_1,k_2}(\theta) = -\frac{sin(\rro_{\phi_1}(\theta+\frac{a}{2}))}{sin(\rro_{\phi_2}(\theta+\frac{a}{2}))}$.
Both sub-cases $N=4r$ and $N=4r+2$ are now solved by proving that $\theta\mapsto R(\theta)=\frac{sin(\rro_{\phi_1}(\theta))}{sin(\rro_{\phi_2}(\theta))}$ is not injective. A typical case is depicted in \Cref{figmin2}. 
Note that $R(\phi_1)=0$ and $R(\phi_1-\eps)R(\phi_1+\eps)>0$ for $\eps>0$ small enough. Since $R$ is continuous in a neighborhood of $\phi_1$, by intermediary values theorem it follows that there are $\theta_x<\phi_1<\theta_y$ so that $R(\theta_x)=R(\theta_y)$ and the claim (\ref{eq:cos}) follows. 
\end{proof}

\begin{figure}
    \centering
    \includegraphics[width=1\linewidth]{R_theta_plot.png}
    \caption{Plot of function $\theta\mapsto R(\theta)=\frac{sin(\rro_{\phi_1}(\theta))}{sin(\rro_{\phi_2}(\theta))}$ for $N=20$, $\phi_1=\frac{\pi}{30}$ and  $\phi_2=\frac{\pi}{40}=0.25a$.} 
    \label{figmin2}
\end{figure}

The previous result shows that the sorted coorbit embedding is not injective when $S$ has cardinal two. 
Next we show that the sorted coorbit representation provides an injective embedding in $\R^3$ with two or three generic windows. 

\begin{prop}\label{2dmaxfilter}
Consider the representation of the finite group $G=\Z_N\simeq\langle U_a \rangle =\{1,U_a,\ldots,U_a^{N-1}\}$ acting on $\Vc=\R^2$ by planar rotations of a multiple of $a=\frac{2\pi}{N}$. The sorted coorbit representation is injective in the following two cases.
\begin{enumerate}
\item Let $S=\{(1,1),(1,2),(1,3)\}$. Assume $w_1,w_2,w_3\in\R^2$ satisfy the following property: for any $k_1,k_2,k_3 \in\Z$, $\{ U_{k_1 a}w_1,U_{k_2 a}w_2,U_{k_3 a}w_3\}$ is a full spark frame. Then the max filter $\hat{\Phi}_{(w_1,w_2,w_3),S}:\hat{\Vc}\rightarrow\R^3$ is injective. 
\item Assume $N>2$ and let $S=\{(i,1),(j,1),(k,2)\}$, where $i,j,k\in[N]$, $i\neq j$ and, additionally, if $N$ is even then $i+j\neq N+1$. Assume $w_1,w_2\in\R^2$ satisfy the following property: for any $l_1,l_2\in\Z$, $\{ U_{l_1 a/2}w_1,U_{l_2 a/2}w_2\}$ is linearly independent. Then the map $\hat{\Phi}_{(w_1,w_2),S}:\hat{\Vc}\rightarrow\R^3$ is injective.
\end{enumerate}
\end{prop}

\begin{proof}
\mbox{}

1. Assume $x',y' \in \Vc$ so that $\Phi_{(w_1,w_2,w_3),S}(x')=\Phi_{(w_1,w_2,w_3),S}(y')$. Without loss of generality, assume $x'\neq 0$ because otherwise we obtain $y'=0$ and therefore $x'\sim y'$. 
Fix $x\in[x']$ and pick $y\in[y']$ so that $\ip{x}{y}= \max_{g \in G} \ip{x}{U^g_ay}>0$. Let $b\in[-\frac{a}{2},\frac{a}{2})$ and $\lambda>0$ be the unique scalars so that $x=\lambda\, U_b y$.  Replace $w_1,w_2,w_3$ with elements from their orbits so that $\Phi_{(w_1,w_2,w_3),S}(x)=(\ip{w_1}{x},\ip{w_2}{x},\ip{w_3}{x})$.  
	Then for some $g_1,g_2,g_3\in G$,
	\[\ip{w_1}{x}=\ip{U^{g_1}_a w_1}{y}~,~\ip{w_2}{x}=\ip{U^{g_2}_a w_2}{y}~,~\ip{w_3}{x}=\ip{U^{g_3}_a w_3}{y}.\]
 

If we assume that $x= \lambda U_by$ for some $b \in [0,\frac{a}{2})$ and $\lambda >0$. Then $g_1,g_2,g_3 \in \{0,{N-1}\}$.

If we assume that $x= \lambda U_by$ for some $b \in [-\frac{a}{2},0)$ and $\lambda >0$. Then $g_1,g_2,g_3 \in \{0,1\}$.

In both cases the pigeonhole principle implies that there exists $h \in \{0,1,{N-1}\}$ and $i\neq j \in \{1,2,3\}$ such that 
\[\ip{w_{i}}{x}=\ip{U^h_a w_i}{y}~\text{ and }~\ip{w_{j}}{x}=\ip{U^h_a w_j}{y}.\]

 Hence $y=u_a^h x$ and therefore $x\sim y$.

\quad

2. Without loss of generality assume $\norm{w_1}=\norm{w_2}=1$. 
\Cref{sortedcoobit} provides an explicit parametrization of $\Phi_{(w_1,w_2},S)$. \Cref{sortedcoobitfinal} yields: 

\begin{align*}
\Phi_{(w_1,w_2),S}(x) &= c_1 \begin{bmatrix} \cos(t_{1,1}-(-1)^i\lfloor\frac{i}{2}\rfloor a) \\ \cos(t_{1,1}-(-1)^j\lfloor\frac{j}{2}\rfloor a) \\ \cos(t_{1,2}-(-1)^k\lfloor\frac{k}{2}\rfloor a)\end{bmatrix}, \\
\Phi_{(w_1,w_2),S}(y) &= c_2 \begin{bmatrix} \cos(t_{2,1}-(-1)^i\lfloor\frac{i}{2}\rfloor a) \\ \cos(t_{2,1}-(-1)^j\lfloor\frac{j}{2}\rfloor a) \\ \cos(t_{2,2}-(-1)^k\lfloor\frac{k}{2}\rfloor a)\end{bmatrix},
\end{align*}
with
\[ t_{u,v} = \rro_{\phi_v}(\theta_u) := \min(|\theta_u-\phi_v|, a-|\theta_u-\phi_v|),~~ u,v\in\{1,2\} \]
and $c_1=\norm{x}$, $c_2=\norm{y}$, 
$\theta_1,\theta_2,\phi_1,\phi_2\in[0,a)$ so that $x=c_1 U_{\theta_1}e_1$, $y=c_2 U_{\theta_2}e_1$, $w_1=U_{\phi_1}e_1$, $w_2=U_{\phi_2}e_1$, $e_1=(1,0)^T$.

\ignore{

For this case we use the complex representation of this action by identifying $\Vc=\mathbb{R}^2$ with the field of complex numbers $\mathbb{C}$.

To simplify the proof we assume $\norm{w_1}=\norm{w_2}=1$. Furthermore, since the injectivity problem is rotation invariant, without loss of generality, we consider the case $w_1=e^{i\phi_1}$ and $w_2=1$ (so $\phi_2=0$). We represent $x$ and $y$ as $x=c_1e^{i\theta_1}$ and $y=c_2e^{i\theta_2}$. The phases are chosen to be the unique representatives in $[0, a)$ where $a = 2\pi/N$. The scalar product is given by $\ip{u}{v}=\text{Real}(u\bar{v})$. For the embedding to be injective we require $\phi_1\in(0,a)\setminus \{a/2\}$.

Let us define the shortest circular distance between $\theta_i$ and $\phi_j$ as:
\[ t_{i,j} = \rro_{\phi_j}(\theta_i) := \min(|\theta_i-\phi_j|, a-|\theta_i-\phi_j|). \]

With this notation, and using the structure from Proposition 4.1 for the measurement set $S=\{(1,1), (2,1), (1,2)\}$, the embedding $\Phi_{(w_1,w_2),S}$ is given by its components:
\begin{align*}
\Phi_{(w_1,w_2),S}(x) &= c_1 \begin{bmatrix} \cos(t_{1,1}) \\ \cos(a-t_{1,1}) \\ \cos(t_{1,2}) \end{bmatrix}, \\
\Phi_{(w_1,w_2),S}(y) &= c_2 \begin{bmatrix} \cos(t_{2,1}) \\ \cos(a-t_{2,1}) \\ \cos(t_{2,2}) \end{bmatrix}.
\end{align*}

}

Assume $\Phi_{(w_1,w_2),S}(x)=\Phi_{(w_1,w_2),S}(y)$. 
If $\Phi_{(w_1,w_2),S}(x)=0$ then $c_1=0$, which implies $x=0$. Similarly, $y=0$. Hence $x=y$.

Consider now the case $\Phi_{(w_1,w_2),S}(x) = \Phi_{(w_1,w_2),S}(y) \neq 0$. Hence $c_1,c_2\neq 0$. From the first two components of $\Phi_{(w_1,w_2),S}$ we get:
$$ c_1 \cos(t_{1,1}-(-1)^i\lfloor\frac{i}{2}\rfloor a) = c_2 \cos(t_{2,1}-(-1)^i\lfloor\frac{i}{2}\rfloor a)$$ and $$c_1 \cos(t_{1,1}-(-1)^j\lfloor\frac{j}{2}\rfloor a) = c_2 \cos(t_{2,1}-(-1)^j\lfloor\frac{j}{2}\rfloor a). $$
Let $k_1= -(-1)^i\lfloor\frac{i}{2}\rfloor$ and $k_2=-(-1)^i\lfloor\frac{i}{2}\rfloor$.
Notice that when $N$ is odd or, when $N$ is even and $i+j\neq N+1$, $(k_2-k_1)a\neq \pm \pi$. In fact, only when $N=2p$ is even and $i+j=N+1$ then  $k_2-k_1=\pm p$ and $(k_2-k_1)a=(-1)^i \pi$. It follows that either $c_1\cos(t_{1,1}+k_1a)\neq 0$ or $c_1\cos(t_{1,1}+k_2a)\neq 0$. 

Assume $c_1\cos(t_{1,1}+k_2a)\neq 0$. If this is not the case, switch $i$ with $j$ and repeat the argument. 
Dividing these two equations yields,
\begin{equation}
    \label{eq:St1}
\frac{\cos(t_{1,1}+k_1 a)}{\cos(t_{1,1}+k_2 a)} = \frac{\cos(t_{2,1} +k_1 a)}{\cos(t_{2,1}+k_2 a)}.
\end{equation}

Let $$S_{k_1,k_2}(x)= \frac{\cos(k_1a+x)}{\cos(k_2a+x)}.$$

Then equation (\ref{eq:St1}) turns into $S_{k_1,k_2}(t_{1,1})=S_{k_1,k_2}(t_{2,1})$. We claim $S_{k_1,k_2}$ is injective on $[0,\frac{a}{2}]$ (possibly excluding one point where it becomes singular) and therefore $t_{1,1}=t_{2,1}$.

Notice 
$$\frac{d S_{k_1,k_2}(x)}{dx}= \frac{\sin((k_2-k_1)a)}{(\cos(k_2a+x))^2}.$$
Therefore, $\frac{d S_{k_1,k_2}(x)}{dx}$ always has the same sign as $\sin((k_2-k_1)a)\neq 0$, which is independent of $x$.

\textbf{Case 1:} If $\cos(k_2a+x)$ does not vanish anywhere for $x\in[0,a/2]$, then $S_{k_1,k_2}$ is strictly monotone (increasing or decreasing) and therefore injective. 

\textbf{Case 2:}
 If $\cos(k_2a+x)$ vanishes somewhere on $[0,a/2]$, then there exists a unique $x_0 \in [0,a/2]$ such that $\cos(k_2a+x_0)=0$. Note that $x_0= \pi/2-k_2a$ or $x_0= -k_2a-\pi/2$. 
 
 If $x_0=0$ or $x_0=\frac{a}{2}$ then $S_{k_1k_2}$ is strictly monotone on $(0,\frac{a}{2})$ and therefore is injective.

Consider now the sub-case $0 <x_0 <\frac{a}{2}$. Thus $S_{k_1,k_2}$ is continuous and strictly monotone on $[0,x_0)$ and $(x_0,\frac{a}{2}]$, separately. We claim that $S_{k_1,k_2}$ is injective on $[0,x_0)\cup (x_0,\frac{a}{2}]$. Compute
\begin{align*}
S_{k_1,k_2}(0) - S_{k_1,k_2}\left(\frac{a}{2}\right) &= \frac{\cos(k_1a)}{\cos(k_2a)} - \frac{\cos\left(k_1a+\frac{a}{2}\right)}{\cos\left(k_2a+\frac{a}{2}\right)} \\
&= \frac{\sin((k_1 - k_2)a)\sin\left(\frac{a}{2}\right)}{\cos(k_2a)\cos\left(k_2a+\frac{a}{2}\right)}.
\end{align*}
Note that $\cos(k_2a)$ and $\cos\left(k_2a+\frac{a}{2}\right)$  have opposite signs and that $\sin\left(\frac{a}{2}\right)$ is always positive. Therefore, the sign of $S_{k_1,k_2}(0) - S_{k_1,k_2}\left(\frac{a}{2}\right)$ is the same as the sign of $\sin((k_2 - k_1)a)$ and the same as the sign of $\frac{d S_{k_1,k_2}(x)}{dx}$ at all $x\in[0,\frac{a}{2}]\setminus\{x_0\}$. 

Now we have two sub-cases:

\textbf{Sub-case 2.1:}
$S_{k_1,k_2}(x)$ is strictly increasing on $[0,x_0)$, strictly increasing on $(x_0,a/2]$ and $S_{k_1,k_2}(a/2) < S_{k_1,k_2}(0)$.

\textbf{Sub-case 2.2:}
$S_{k_1,k_2}(x)$ is strictly decreasing in $[0,x_0)$, strictly decreasing in $(x_0,a/2]$  and $S_{k_1,k_2}(a/2) > S_{k_1,k_2}(0)$.

In both cases we conclude that $S_{k_1,k_2}(x)$ is injective on $[0,x_0) \cup (x_0,a/2]$.

Therefore equation (\ref{eq:St1}) implies $t_{1,1}=t_{2,1}$ and $c_1=c_2$.

The equality involving the third component of $\Phi_{(w_1,w_2),S}$ implies $\cos(t_{1,2}+k_3a)=\cos(t_{2,2}+k_3a)$, where $k_3=-(-1)^k \lfloor\frac{k}{2}\rfloor$. Note that $t_{1,2},t_{2,2}\in[0,\frac{a}{2}]$ and $|k_3a|\leq \pi$. This implies $t_{1,2} = t_{2,2}$.

We have shown $\rro_{\phi_1}(\theta_1)=\rro_{\phi_1}(\theta_2)$ and $\rro_{\phi_2}(\theta_1)=\rro_{\phi_2}(\theta_2)$. This means 
\begin{eqnarray}
\label{eq:rhorho}    
\min(|\theta_1-\phi_1|, a-|\theta_1-\phi_1|) & = & \min(|\theta_2-\phi_1|, a-|\theta_2-\phi_1|), \\
\nonumber
\min(|\theta_1-\phi_2|, a-|\theta_1-\phi_2|) & = & \min(|\theta_2-\phi_2|, a-|\theta_2-\phi_2|).
\end{eqnarray}
To finish the proof, we now need to show that $\theta_1=\theta_2$ and therefore $x=y$.

Notice that (\ref{eq:rhorho}) can be interpreted as $\rro_{\theta_1}(\phi_1)=\rro_{\theta_2}(\phi_1)$ and $\rro_{\theta_1}(\phi_2)=\rro_{\theta_2}(\phi_2)$. 
As seen in Figure \ref{figmin1}, there are only two possible intersections for two distinct functions $\rro$: $\phi_1=\phi_2$ or $|\phi_1 - \phi_2|=\frac{a}{2}$. But either case contradicts the assumption that $\{U_{l_1a/2}w_1,U_{l_2a/2}w_2\}$ is linearly independent for every integers $l_1,l_2$. Therefore, $\rro_{\theta_1}(\cdot)=\rro_{\theta_2}(\cdot)$, i.e., $\theta_1=\theta_2$. This concludes the proof of \Cref{2dmaxfilter}.

\ignore{
\textbf{Case a:} If $\theta_1 = \theta_2$, then $x=y$ and the proof is complete.

\textbf{Case b:} Assume instead that 
$\theta_1 = a - \theta_2$. 
If $\theta_1=\theta_2=\frac{\pi}{N}$, then again $x=y$ and the proof is complete. Otherwise, assume
\[
0 < \theta_1 < \frac{\pi}{N} < \theta_2 < \frac{2\pi}{N}.
\]

Substituting $\delta_1=|\theta_1-\phi_1|$ and $\delta_2=|\theta_2-\phi_1|=|\frac{2\pi}{N}-\theta_1-\phi_1|$ into the previous equation yields:
\begin{equation}\label{rot2}
\min\left(|\theta_1-\phi_1|,\frac{2\pi}{N}-|\theta_1-\phi_1|\right) = \min\left(\left|\frac{2\pi}{N}-\theta_1-\phi_1\right|,\frac{2\pi}{N}-\left|\frac{2\pi}{N}-\theta_1-\phi_1\right|\right).
\end{equation}

Equation~\eqref{rot2} is satisfied only in the following two cases:

\vspace{1em}

\textbf{Case 1:} 
\[
|\theta_1 - \phi_1| = \left| \frac{2\pi}{N} - \theta_1 - \phi_1 \right|.
\]

\textbf{Subcase 1.1:} 
\[
\theta_1 - \phi_1 = \frac{2\pi}{N} - \theta_1 - \phi_1.
\]

This implies $\theta_1 = \frac{\pi}{N}$, so $\theta_2 = \frac{2\pi}{N} - \frac{\pi}{N} = \frac{\pi}{N}$, which completes the proof.

\textbf{Subcase 1.2:}
\[
\theta_1 - \phi_1 = -\left( \frac{2\pi}{N} - \theta_1 - \phi_1 \right).
\]

This implies $\phi_1 = \frac{\pi}{N}$, contradicting the assumption $\phi_1 \in \left(0,\frac{2\pi}{N}\right)\setminus\left\{\frac{\pi}{N}\right\}$.

\vspace{1em}

\textbf{Case 2:} 
\[
|\theta_1 - \phi_1| = \frac{2\pi}{N} - \left| \frac{2\pi}{N} - \theta_1 - \phi_1 \right|.
\]

\textbf{Subcase 2.1:} 
\[
\left| \frac{2\pi}{N} - \theta_1 - \phi_1 \right| = \frac{2\pi}{N} - \theta_1 - \phi_1 \geq 0.
\]

Then,
\[
|\theta_1 - \phi_1| = \frac{2\pi}{N} - \left(\frac{2\pi}{N} - \theta_1 - \phi_1\right) = \theta_1 + \phi_1,
\]
which implies $\theta_1=0$ or $\phi_1=0$. This contradicts the assumptions $0<\theta_1<\frac{\pi}{N}<\theta_2<\frac{2\pi}{N}$ and $\phi_1 \in \left(0,\frac{2\pi}{N}\right)\setminus\left\{\frac{\pi}{N}\right\}$.

\vspace{1em}

\textbf{Subcase 2.2:} 
\[
\left| \frac{2\pi}{N} - \theta_1 - \phi_1 \right| = -\left(\frac{2\pi}{N} - \theta_1 - \phi_1\right) = -\frac{2\pi}{N} + \theta_1 + \phi_1 > 0.
\]

Then,
\[
|\theta_1 - \phi_1| = \frac{2\pi}{N} - \left( -\frac{2\pi}{N} + \theta_1 + \phi_1 \right) = \frac{4\pi}{N} - \theta_1 - \phi_1,
\]
which implies $\theta_1 = \frac{2\pi}{N}$ or $\phi_1 = \frac{2\pi}{N}$, again contradicting the assumptions.

\vspace{1em}

Therefore, we conclude that the embedding $\Phi_{(w_1,w_2),S}$ is injective on the quotient space $\hat{\Vc}$.

}

\end{proof}

Thus, for the finite group of planar rotations, we conclude that the minimal dimension of an injective embedding using sorted coorbits is $m=3$. 
\vspace{5mm}

\begin{rem}
In the Case 1 of \cref{2dmaxfilter}, it is possible to estimate both the lower and the upper Lipschitz constants of the embedding $\Phi_{\bw, S}$. Specifically, since we have shown that the \emph{max filter} is achieved by the same group elements for at least two distinct windows $w_i, w_j$ with $i\neq j \in [3]$, it follows that
\[
a \cdot \dis(x, y) \le \|\Phi_{\bw, S}(x) - \Phi_{\bw, S}(y)\| \le b \cdot \dis(x, y),
\]
where
\[
a = \min_{i, j \in [3]} \sigma_1[w_i \mid w_j],
\quad \text{and} \quad
b = \max_{i, j \in [3]} \sigma_2[w_i \mid w_j].
\]

Here, $\sigma_1[w_i \mid w_j]$ and $\sigma_2[w_i \mid w_j]$ denote the smallest and largest singular values, respectively, of the matrix $[w_i|w_j]$.

Therefore the distortion is bounded above by $d=\frac{\max_{i, j \in [3]} \sigma_2[w_i \mid w_j]}{\min_{i, j \in [3]} \sigma_1[w_i \mid w_j]}$.
\end{rem}

\begin{rem}
It is likely that Case 1 of \cref{2dmaxfilter} holds even for different measurements from each sorted coorbit. That is, for $S=\{(i,1),(j,2),(k,3)\}$. We leave this case open for future study.
\end{rem}

\subsection{Analysis of the algebraic indices $\rho_n(q)$ and of \Cref{theoremmain2}}

Recall 
\[ H_n=\{(m_1,\ldots,m_n)\} \in \Z_N^n~,~(m_1,\ldots,m_n)~\text{distinct}.
\]

For $g=(g_1,\ldots,g_n),h=(h_1,\ldots,h_n)\in H_n$ and $q\in\N$,
\[ 
  \Gamma_{g,h}^q
  =
  \bigl\{\, (x,y)\in \mathbb{R}^2 \times \mathbb{R}^2 
  :
  \dim\!\bigl(\mathrm{span}\{\;U_{a}^{g_1} x - U_{a}^{h_1} y,\dots,\;U_{a}^{g_n}x - U_{a}^{h_n}y\}\bigr) = q 
  \bigr\}.
\]

\[
  \gamma_{g,h}^q  =
  \dim\bigl(\Gamma_{g,h}^q\bigr)\quad , \quad
  \rho_n(q) = 
  \max_{g,h \in H_n}\,\gamma_{g,h}^q.
\]
where by convention the dimension of the empty set is $-1$, i.e.,  
$\Gamma_{g,h}^q = \varnothing\quad\Longrightarrow\quad
  \gamma_{g,h}(q) =-1$.
Therefore, $\rho_{n}(q)=-1$ if $q>n$.

\begin{prop}\label{lambdarot}

The following holds.
\[
  \rho_1(q)=
  \begin{cases}
    2, & q=0,\\[6pt]
    4, & q=1,\\[6pt]
    -1, & q \ge 2.
  \end{cases}~~,~~
  \rho_2(q)\;=\;
  \begin{cases}
    2, & q=0,\\
    3, & q=1~\&~ \text{$N$ odd},\\
    4, & q=1~\&~\text{$N$ even},\\
    4, & q=2 \\
    -1, & q\ge3.
  \end{cases}
\]
\end{prop}

\begin{proof}
First, note that if $q> min(n,2)$ then $\rho_{n}(q)=-1$ since $\Gamma_{g,h}^q = \emptyset$


\begin{itemize}
    \item Case $(n=1,q=0)$. This condition is satisfied only by pairs $(x,y)$ of the form $(x,U_a^{-g_1}U_a^{h_1}x)$, with $x\in\R^2,$ which generate a 2-dimensional subspace in $\R^4$. Consequently, $\rho_{1}(0) = 2$. 
    
   \item Case $(n=1,q=1)$. Note that the complement of the set of pairs $(x,y)$ that satisfy condition (n = 1, q = 0), satisfies condition (n = 1, q = 1), that is, $(\Gamma^{0}_{g,h})^c=\Gamma^1_{g,h}$. Therefore, $\rho_1(1)=4$.
   
   \item Case $(n=2,q=0)$. This condition is satisfied by pairs $(x,y)$ that obey $U_a^{g_1}x=U_a^{h_1}x$ and $U_a^{g_2}x = U_a^{h_2}x$.     
   If $U_a^{-g_1}U_a^{h_1}=U_a^{-g_2}U_a^{h_2}$ then $\gamma^0_{g,h}=2$. Otherwise, $\Gamma_{g,h}^0=\{0\}$ and $\gamma^0_{g,h}=0$. 
   Therefore, $\rho_2(0)=2$.
   
   \item Case $(n=2,q=1)$ and $N$ odd. This condition is satisfied by pairs $(x,y)$ so that $\lambda_1(U_a^{g_1}x-U_a^{h_1}y) = \lambda_2 (U_a^{g_2}x-U_a^{h_2}y)$ for some $\lambda_1,\lambda_2 \in \R$ with $\lambda_1^2+\lambda_2^2=1$. Since $g_1\neq g_2$,  $h_1\neq h_2$ and $N$ is odd, it follows that $\lambda_1 U_a^{g_1}-\lambda_2 U_a^{g_2}$ is always invertible and $\Gamma^1_{g,h}$ is the projection onto the second factor (removing $(0,0)$) of the total space of the vector bundle $\{(\lambda,x,(\lambda_1 U_a^{h_1}-\lambda_2 U_a^{h_2})^{-1}(\lambda_1U_a^{g_1}-\lambda_2 U_a^{g_2})x)~,~\lambda=(\lambda_1,\lambda_2)\in \R\P^1,x\in\R^2\}$ with the base manifold the 1-dimensional real projective space $\R\P^1$. Therefore, $\rho_2(1)=3$.
   
   \item Case $(n=2,q=1)$ and $N$ even. Again, $(x,y)$ must satisfy same condition as above: $\lambda_1(U_a^{g_1}x-U_a^{h_1}y) = \lambda_2 (U_a^{g_2}x-U_a^{h_2}y)$ for some $\lambda_1,\lambda_2 \in \R$ with $\lambda_1^2+\lambda_2^2=1$. Unlike the case $N$ odd, when $g_1+g_2=0$ in $\Z_N$ then $U_a^{g_1}-(-1)U_a^{g_2}=0$ which shows that $\Gamma^1_{g,g}=\R^2\times\R^4$ and thus $\rho_2(1)=4$. Note: for any $(g_1,g_2)\in\Z_N$ with $g_1\neq g_2$ and $g_1+g_2\neq 0$, $\gamma^1_{g,g}=3$. 
   
    \item Case $(n=2,q=2)$ Note that for $g_1\neq g_2$ and $g_1+g_2\neq 0$, $\Gamma^2_{g,g}$ contains an open ball centered at a pair $(x,y)$ so that $\{x-y,U_a^{g_1-g_2}(x-y)\} $ is linearly independent. Thus $\rho_2(2)=4$.
   
\end{itemize}

\end{proof}

\begin{prop}\label{prop4.4} \mbox{}
Assume $N$ is odd. For Zariski generic $w_1,w_2,w_3\in \R^2$ and for every $S\subset [N] \times [3]$ of size 4 (or more), the map $\hat{\Phi}_{(w_1,w_2,w_3),S}$ is injective.

\end{prop}

\begin{proof}
\mbox{}

We verify \Cref{eq5modified} for $(n_1,n_2,n_3)=(2,1,1)$. The only triplets $(q_1,q_2,q_2)\in[n_1]\times[n_2]\times[n_3]$ 
are (2,1,1) and (1,1,1). In both cases \Cref{eq5modified} is satisfied,
\begin{equation}
\max_{
q_1\in[n_1],\ldots,q_p\in[n_p]
}\left(
min_{i\in[p]}\rho_{n_i}(q_i) -(q_1+\ldots+q_p) \right)
\leq 0=d_G  .
\end{equation}

\end{proof}

\begin{rem}
    In the case $N$ is even, Proposition \ref{prop4.4} remains true for sets $S$ that additionally satisfy the same condition as in Case 2 of \Cref{2dmaxfilter}, namely that the two elements $(k_1,i),(k_2,i)\in S$ taken from the same sorted coorbit should have $k_1+k_2 \neq 1+N$. This claim is shown by a more careful analysis of the "bad set" $\Fc_S$ in (\ref{eq:badset}). Specifically, the inclusion
     $\Fc_S\subset \bigcup_{\bg,\bh \in (H_n)^p} \Fc_{\substack{\bg,\bh}}$ can be improved by $\Fc_S\subset \bigcup_{\bg,\bh \in \widetilde{H_{n_1}}\times\cdots\cdots\widetilde{H_{n_p}}} \Fc_{\substack{\bg^{-1},\bh^{-1}}}$ with $\widetilde{H_n}$ denoting the set of only those n-tuples of "realizable" group elements. 
     In the case $N$ is even and $S$ as above, $\widetilde{H_{1}}=
     \Z_N$ and $\widetilde{H_{2}}\subset\{(m_1,m_2)\in\Z_N\times\Z_N~,~m_1+m_2\neq 0~,~m_1\neq m_2\}$. In this case $\max_{g,h\in\widetilde{H_2}}\gamma^1_{g,h}=3$. 
\end{rem}

\ignore{
If $\rho_2(1)=4$ and $d_{G}=0$, then
\[
  \frac{1}{2}\bigl(\rho_2(1)-d_{G}-1\bigr)
  =
  \frac{3}{2}.
\]
}

\printbibliography

\appendix
\section{Summary of semi-algebraic geometry results}
In this appendix we summarize basic results from semi-algebraic geometry. 
We follow the notations from  
\cite{bochnak2013real}. For a pedestrian exposition on semi-algebraic results, the reader is referred to
\cite{coste2000introduction}.

\subsection{Semi-algebraic sets and functions}

\begin{defin}[Definition 2.1.1 in \cite{bochnak2013real}]
    Let $B$ be a subset of $\R[X_1,\dots,X_n]$.
    Denote $Z(B)=\{x: \in \R^n:f(x)=0,~\forall f \in B\}.$ An algebraic set of $\R^n$ is the $Z(B)$ for some $B$ be a subset of $\R[X_1,\dots,X_n]$.
\end{defin}

A generalization of algebraic sets is found in semi-algebraic sets, which encompass polynomial inequalities in addition to algebraic equations.

\begin{defin}[Definition 2.1.4 in \cite{bochnak2013real}]
A subset $S$ of $\R^n$ is a semi-algebraic set, if it has the form.
\[\bigcup_{i=1}^s\bigcap_{j=1}^{r_i}\{x\in \R^n: f_{i,j} *_{i,j} 0\}\]
where, $f_{i,j} \in \R[X_1,\dots,X_n]$ and $*_{i,j}$ is either $<$ or $=$ for $i \in [s], j \in [r_i]$.
\end{defin}


Now we will state some results from  \cite{bochnak2013real} without proof.

\begin{theor}[Theorem 2.2.1 in \cite{bochnak2013real}]\label{Tarski}
Let $A$ be a semi-algebraic set of $\R^{n+1}$ and let $\pi: \R^{n+1} \to \R^{n}$ be the projection on the first $n$ coordinates. Then $\pi(A)$ is a semi-algebraic set of $\R^n$.
\end{theor}

A simple corollary of \Cref{Tarski}, shown by induction, is the following:

\begin{cor}\label{proj}
If $A$ is a semi-algebraic subset of $\R^{n+k}$, its image by the projection on the space of the first $n$ coordinates is a semi-algebraic subset of $\R^n$.
\end{cor}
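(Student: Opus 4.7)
The plan is to derive this as a direct consequence of the Tarski--Seidenberg quantifier elimination theorem for the first-order theory of real closed fields. First, I would express the hypothesis in logical form: by definition of a semialgebraic set, $A\subset\R^{n+k}$ is the set of $(x,y)=(x_1,\dots,x_n,y_1,\dots,y_k)$ satisfying some quantifier-free formula $\phi(x,y)$ in the language of ordered rings, i.e., a finite Boolean combination of atoms $P(x,y)=0$ and $Q(x,y)>0$ with $P,Q$ polynomials. This is possible because the class of finite unions, intersections, and complements of such basic sets is precisely the class of semialgebraic sets.

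Next, I would reformulate the projection in logical terms. Let $\pi:\R^{n+k}\to\R^n$ denote projection onto the first $n$ coordinates. Then
\[ \pi(A)\;=\;\{x\in\R^n:\ \exists y_1\cdots\exists y_k\,\phi(x,y)\}. \]
The defining condition $\psi(x):=\exists y_1\cdots\exists y_k\,\phi(x,y)$ is a priori not quantifier-free, so it does not immediately exhibit $\pi(A)$ as semialgebraic. This is where the main content of the corollary lies.

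The key step is to invoke Tarski--Seidenberg: over any real closed field, any first-order formula in the language of ordered rings is equivalent to a quantifier-free formula in the same language. Applying this to $\psi$ produces a quantifier-free $\tilde\psi(x_1,\dots,x_n)$, necessarily a Boolean combination of polynomial equalities and strict inequalities in $x$, such that $\tilde\psi(x)\Leftrightarrow\psi(x)$ for all $x\in\R^n$. Thus $\pi(A)=\{x\in\R^n:\tilde\psi(x)\}$ is semialgebraic by definition.

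The genuine difficulty in this argument is entirely absorbed into the Tarski--Seidenberg theorem itself, whose proof proceeds by induction on the number of quantifiers and ultimately reduces to a careful analysis of sign variations of univariate polynomials (Sturm-type arguments). Since the excerpt explicitly allows us to quote that theorem, the corollary reduces to the logical reformulation above. A more constructive alternative, which avoids directly appealing to quantifier elimination, would use cylindrical algebraic decomposition: one partitions $\R^{n+k}$ into finitely many semialgebraic cells on which every polynomial appearing in $\phi$ has constant sign and which are compatible with the projection $\pi$; then $\pi(A)$ is a finite union of projected cells, each of which is semialgebraic by construction.
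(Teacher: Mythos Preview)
Your proposal is correct and aligns with the paper's treatment: the paper does not prove this statement at all but simply quotes it from \cite{coste2000introduction} as one of two corollaries of the Tarski--Seidenberg theorem, which is precisely the reduction you carry out. Your write-up in fact supplies more detail than the paper, which merely cites the result.
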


\begin{prop}[Proposition 2.2.2 in \cite{bochnak2013real}]
   If $A$ is a semi-algebraic subset of $\R^n$, its closure and its interior in $\R^n$ are again semi-algebraic.
\end{prop}

Let $A \subset \R^m$ and $B \subset \R^n$ be  semi-algebraic sets. A mapping $f:A \to B$ is called {\em semi-algebraic} if its graph:
\[\Gamma_f=\{(x,y) \in A \times B: y=f(x)\}\]
is a semi-algebraic set of $\R^m \times \R^n$.

\begin{prop}[Proposition 2.2.6 and 2.2.7  in \cite{bochnak2013real}]\label{image}
\par 
\begin{enumerate}
\item 
If $A$ and $B$ are semi-algebraic sets and $f:A \to B$ is a regular rational mapping (all its coordinates are rational
fractions whose denominators do not vanish on $A$), then $f$ is also semi-algebraic.
\item The image and the inverse image of a semi-algebraic set through a semi-algebraic mapping are semi-algebraic.
\item The composition of two semi-algebraic mappings is semi-algebraic.
\item If $f:A  \to \R$ such that $f \geq 0$ on $A$ then $\sqrt{f}$ is a semi-algebraic function.
\end{enumerate}
\end{prop}

\begin{defin}[Definition 12.7.1. in \cite{bochnak2013real}] Let \( M \subset \mathbb{R}^n \) be a semi-algebraic set. Let \( \xi = (E, p, M) \) be an \( R \)-vector bundle of rank \( k \) over \( M \). A family of local trivializations
\[
(U_i, \varphi_i : U_i \times \mathbb{R}^k \to p^{-1}(U_i))_{i \in I}
\]
of \( \xi \) is said to be a \textit{semi-algebraic atlas} of \( \xi \) if \( (U_i)_{i \in I} \) is a finite open semi-algebraic covering of \( M \) and the mappings
\[
\varphi_i^{-1} \circ \varphi_j |_{(U_i \cap U_j) \times \mathbb{R}^k}
\]
are continuous semi-algebraic, for every pair \( (i,j) \in I \times I \). Two semi-algebraic atlases are equivalent if their union is still a semi-algebraic atlas. A \textit{semi-algebraic vector bundle} is a vector bundle \( \xi = (E, p, M) \) equipped with an equivalence class of semi-algebraic atlases.

Let \( (\xi, (U_i, \varphi_i)_{i \in I}) \) and \( (\xi', (U_j', \varphi_j')_{j \in J}) \) be two semi-algebraic vector bundles over \( M \). A morphism \( \psi : \xi \to \xi' \) of vector bundles is said to be a \textit{semi-algebraic morphism} if the mappings
\[
(\varphi_j')^{-1} \circ \psi \circ \varphi_i |_{(U_i \cap U_j') \times \mathbb{R}^k}
\]
are continuous semi-algebraic, for every pair \( (i,j) \in I \times J \). A section \( s \) of \( \xi \) is said to be a \textit{semi-algebraic section} if the mappings
\[
\varphi_i^{-1} \circ s|_{U_i} : U_i \to U_i \times \mathbb{R}^k
\]
are continuous semi-algebraic, for every \( i \in I \).

We allow vector bundles to have possibly different ranks over different semi-algebraically connected components of \( M \).
\end{defin}

\begin{prop}[Proposition 12.1.4.] Let
\[
E_{n,k} = \{(A,v) \in G_{n,k}(\mathbb{R}) \times \mathbb{R}^n \mid A \cdot v = v\},
\]
\[
E_{n,k}^\perp = \{(A,v) \in G_{n,k}(\mathbb{R}) \times \mathbb{R}^n \mid A \cdot v = 0\}.
\]
\textit{Let} \( p_{n,k} \) (\textit{resp.} \( p_{n,k}^\perp \)) \textit{be the canonical projection of} \( E_{n,k} \) (\textit{resp.} \( E_{n,k}^\perp \)) \textit{onto} \( G_{n,k}(\mathbb{R}) \). \textit{Then} \( \gamma_{n,k} = (E_{n,k}, p_{n,k}, G_{n,k}(\mathbb{R})) \) \textit{and} \( \gamma_{n,k}^\perp = (E_{n,k}^\perp, p_{n,k}^\perp, G_{n,k}(\mathbb{R})) \) \textit{are pre-algebraic vector bundles over} \( G_{n,k}(\mathbb{R}) \) \textit{of rank} \( k \) \textit{and} \( n-k \), \textit{respectively. Moreover,}
\[
\gamma_{n,k} \oplus \gamma_{n,k}^\perp \cong_{\text{alg}} \mathbb{R}^n.
\]
\textit{The bundle} \( \gamma_{n,k} \) \textit{is called the universal vector bundle over} \( G_{n,k}(\mathbb{R}) \).
\end{prop}

\begin{defin}[Definition 12.1.6.] 
A pre-algebraic vector bundle over \(X\) is said to be algebraic if there exists an injective algebraic morphism from \(\xi\) to a trivial bundle \( \mathbb{R}^n_X \) (i.e., \(\xi\) is algebraically isomorphic to a pre-algebraic vector subbundle of a trivial bundle).
\end{defin}

\begin{theor}[Theorem 12.1.7.] 
Let \(\xi = (E, p, X)\) be a pre-algebraic vector bundle of rank \(k\) over \(X\). Then the following properties are equivalent:
\begin{enumerate}
    \item[(i)] \(\xi\) is algebraic.
    \item[(ii)] For every \(x \in X\), there exist global algebraic sections \(s_1, \ldots, s_k\) of \(\xi\) such that \(s_1(x), \ldots, s_k(x)\) generate the fibre \(p^{-1}(x)\) as an \(R\)-vector space.
    \item[(iii)] There exists a surjective algebraic morphism from a trivial bundle \( \mathbb{R}^n_X \) onto \(\xi\) (i.e., \(\xi\) is algebraically isomorphic to a pre-algebraic quotient of a trivial bundle).
    \item[(iv)] There exists a pre-algebraic vector bundle \(\xi'\) over \(X\) such that \(\xi \oplus \xi'\) is algebraically isomorphic to a trivial bundle \( \mathbb{R}^n_X \) (i.e., \(\xi\) is a pre-algebraic direct factor of a trivial bundle).
    \item[(v)] There exists a regular mapping \(f : X \rightarrow G_{n,k}(\mathbb{R})\) for some \(n \geq k\), such that \(\xi\) is algebraically isomorphic to \(f^*(\gamma_{n,k})\).
    \item[(vi)] There exists a projective module of finite type \(M\) over \(R(X)\) such that \(\operatorname{Calg}(\xi)\) is isomorphic to \(R \times \mathcal{O}_{X}(M)\).
\end{enumerate}
\end{theor}

\begin{prop}[Proposition 12.1.8.]
\begin{enumerate}
    \item[(i)] The vector bundles \(\gamma_{n,k}\) and \(\gamma_{n,k}^\perp\) over \(G_{n,k}(\mathbb{R})\) are algebraic.
    \item[(ii)] If \(\xi\) is an algebraic vector bundle over \(X\), and \(f : Y \to X\) is a regular mapping, then \(f^*(\xi)\) is algebraic.
    \item[(iii)] If \(\xi\) and \(\eta\) are algebraic vector bundles over \(X\), then \(\xi \oplus \eta\), \(\xi \otimes \eta\), \(\xi^\vee\), \(\wedge^q \xi\), and \(\operatorname{Hom}(\xi, \eta)\) are algebraic vector bundles.
\end{enumerate}
\end{prop}

\subsection{Dimension of semi-algebraic sets}

\begin{defin}[Definition 2.8.1 in \cite{bochnak2013real}]\label{dimdef}
Let  $A\subset \R^n$ be a semi-algebraic set. Denote be $P(A)=\R[X_1,\dots,X_n]/I(A)$ the ring of polynomial functions on $A$, where $I(A)$ is the ideal of polynomials that vanish on $A$.
The semi-algebraic dimension of $A$ denoted by $\dim(A)$ is the dimension of the ring $P(A)$, i.e. the maximal length of chains of prime ideals of $P(A)$. 
\end{defin}

\begin{prop}[Theorem 2.3.6 in \cite{bochnak2013real}]\label{PropA9}
Every semi-algebraic subset of $\R^n$ is the disjoint union of a finite number of semi-algebraic sets, 
each of them semi-algebraically homeomorphic to an open hypercube $(0,1)^{d}$ for some $d\in\N$ (with $(0,1)^0$ being a point).
\end{prop}
For our purposes we employ the following result as an equivalent definition for dimension of a semi-algebraic set. 
\begin{prop}[Corollary 2.8.9 in \cite{bochnak2013real}]\label{SemiDim}
Let $A \in \R^n$ be decomposed as the disjoint union of finitely many pieces which are semi-algebraically homeomorphic to open hypercubes $\{(0,1)^{d_i}\}_{i \in I}$. Then the dimension of $A$ is the maximum dimension of hypercubes, i.e. 
$\dim(A)= \max_{i \in I}d_i$.
\end{prop}

\begin{prop}[Proposition 2.8.5. II in \cite{bochnak2013real}]\label{cartesian}
Let $A$ and $B$ be two semi-algebraic sets.
Then $dim(A \times B)= \dim(A)+\dim(B)$.

\end{prop}

\begin{prop}[Proposition 2.8.6. in \cite{bochnak2013real}]\label{dimproject}
  Let $A$ be a semi-algebraic subset of \(\mathbb{R}^{n+1}\) and let 
  \(\Pi : \mathbb{R}^{n+1} \to \mathbb{R}^n\) be the projection onto the space 
  of the first \(n\) coordinates. Then 
  \[
    \dim\bigl(\Pi(A)\bigr) \;\le\; \dim(A).
  \]
\end{prop}

Finally two very important theorems of semi-algebraic geometry are the following:
    
\begin{theor}[Theorem 2.8.8 in \cite{bochnak2013real}]\label{alg2}
   Let $A$ be a semi-algebraic subset of $\R^n$, and $f:A \to R^k$ a semi-algebraic mapping (not necessarily continuous). Then $\dim f(A) \le \dim A$. 
\end{theor}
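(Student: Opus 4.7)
The plan is to reduce the general statement to the case of a continuous semialgebraic map defined on a standard open cube via a cell decomposition adapted to $f$, and then bound the dimension of the image by a differential (rank) argument.

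First I would promote Proposition 2.15 to a cell decomposition that also ``trivializes'' $f$: applying cell decomposition to the graph $\Gamma_f \subset A \times \R^k$ and projecting back to $A$, one obtains a finite semialgebraic partition $A = \bigsqcup_{j \in J} C_j$ such that each $C_j$ is semialgebraically homeomorphic to an open cube $(0,1)^{d_j}$ with $d_j \le \dim A$, and the restriction $f|_{C_j}$ is continuous. Since dimension satisfies $\dim(\bigcup_j B_j) = \max_j \dim B_j$ for finite unions of semialgebraic sets, and $f(A) = \bigcup_{j} f(C_j)$, the claim reduces to showing $\dim f(C_j) \le d_j$ for each $j$. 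Composing with the homeomorphism $(0,1)^{d_j} \to C_j$ further reduces matters to the following key sub-statement: if $g: (0,1)^d \to \R^k$ is a continuous semialgebraic map, then $\dim g((0,1)^d) \le d$.

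For the sub-statement I would apply cell decomposition once more to refine $(0,1)^d$ into finitely many semialgebraic cells $D_\ell$ on which $g$ is of class $C^1$. On each such $D_\ell$, the differential $Dg$ has rank at most $\dim D_\ell \le d$, so by the constant-rank theorem (applied on a sub-cell where the rank is locally constant) the image $g(D_\ell)$ is locally a $C^1$ submanifold of dimension at most $d$; being also semialgebraic, it admits a cell decomposition whose cells have dimension at most $d$, hence $\dim g(D_\ell) \le d$. Taking a maximum over $\ell$ gives $\dim g((0,1)^d) \le d$, completing the reduction and the proof.

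The main obstacle is the key sub-statement, because a careless use of the theorem itself would be circular: the statement that ``semialgebraic bijections preserve dimension'' is a \emph{consequence} of what we are proving, not an input. The non-circular content must come from exploiting continuity and the smooth structure available on the standard cube, where the rank theorem is directly applicable. An alternative route I would fall back on, avoiding $C^1$ refinement altogether, is to stratify the graph $\Gamma_g$ so that on each stratum the projection onto $(0,1)^d$ is a semialgebraic homeomorphism; then the projection onto $\R^k$ is a continuous semialgebraic map on each stratum, and one bounds the image dimension stratum by stratum by the elementary fact that a continuous coordinate projection of an open cell cannot increase its topological dimension.
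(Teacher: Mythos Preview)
The paper does not prove this theorem: it is one of the results explicitly ``state[d] \dots without proof'' from \cite{coste2000introduction} in the preliminaries, so there is no in-paper argument to compare your attempt against.

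Your sketch follows the standard route one finds in Coste or in Bochnak--Coste--Roy (cell-decompose the graph to reduce to continuous maps on cubes, then refine to $C^1$ pieces). The one place worth tightening is the rank-theorem step: arguing via ``a sub-cell where the rank is locally constant'' only controls the image of that sub-cell, so you would need yet another semialgebraic stratification of each $D_\ell$ by the rank of $Dg$ to cover the whole image. A cleaner, non-circular finish is to observe that a $C^1$ image of a $d$-dimensional cell has $(d{+}1)$-dimensional Lebesgue measure zero (locally Lipschitz suffices), hence cannot contain any semialgebraic cell of dimension larger than $d$; this yields $\dim g(D_\ell)\le d$ directly and removes the need for the constant-rank detour.
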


 \begin{theor}[Proposition 2.8.2 in \cite{bochnak2013real}]\label{semimap}
   Let $A \subset \R^n$ be a semi-algebraic set.
   Then \[\dim(A)= \dim(\clos(A))= \dim(\clos_{Zar}(A)).\]
   Note that $clos_{Zar}(A)$ is an algebraic set thus that proposition states that the semi-algebraic dimension of A is equal to the algebraic dimension of its Zariski closure.
 \end{theor}

Using \Cref{semimap} and the fact that an algebraic set  closed with respect to Zariski topology and of dimension strictly smaller than the dimension of the ambient space is nowhere dense, we obtain the following corollary.

\begin{cor}\label{nowheredense}
    Let $A \subset \R^N$ be a semi-algebraic subset of $\R^N$. If $\dim(A)<n$ then $A$ is nowhere dense.
\end{cor}

Moreover, note that any semi-algebraic set consists of finitely many connected components.

 \begin{theor}[Theorem 2.45 in \cite{bochnak2013real}] Every semi-algebraic set has finitely many connected components which are semi-algebraic. Every semi-algebraic set is locally connected.
 \end{theor}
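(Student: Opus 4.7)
The plan is to reduce both assertions to a finite cell decomposition, upgraded so that the cells fit together coherently. Start by applying the cell decomposition (Proposition~2.15) to write $A = \bigsqcup_{i \in I} C_i$ with $I$ finite and each $C_i$ semialgebraically homeomorphic to an open hypercube $(0,1)^{d_i}$; in particular each $C_i$ is connected and locally connected. The crucial technical step I would invoke is a refinement of this decomposition in which the closure $\overline{C_i} \cap A$ of every cell inside $A$ is itself a union of cells of the decomposition---this is the defining property of a cylindrical algebraic decomposition, and it is what turns a disjoint stratification into a usable combinatorial object.

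Granted such compatibility, I would establish finiteness of connected components as follows. Define an equivalence relation $\sim$ on $I$ by declaring $i \sim j$ whenever there exists a chain $i = i_0, i_1, \dots, i_k = j$ in $I$ such that for each $l$ one of $C_{i_l}, C_{i_{l+1}}$ lies in the closure of the other. Each equivalence class $J$ yields a set $A_J = \bigsqcup_{i \in J} C_i$, which is connected because it is a finite union of connected cells linked pairwise by nontrivial closure relations. The compatibility of the decomposition forces the topological boundary of $A_J$ inside $A$ to be empty, so each $A_J$ is clopen in $A$. Consequently the $A_J$ are exactly the connected components; since $I$ is finite there are finitely many of them, and each $A_J$, being a finite union of semialgebraic cells, is itself semialgebraic.

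For local connectedness, fix $p \in A$ lying in a unique cell $C_{i_0}$, and let $\mathcal{N}_p = \{C_j : p \in \overline{C_j}\}$, a finite subcollection of cells. Given any neighborhood $V$ of $p$ in $A$, I would build a smaller connected neighborhood by intersecting $\bigcup_{C \in \mathcal{N}_p} C$ with a Euclidean ball $B(p, \varepsilon)$. For $\varepsilon$ small enough this set lies in $V$; using the cylindrical structure one checks that each piece $B(p,\varepsilon)\cap C_j$ is connected and contains $p$ in its closure, so their union is connected. This produces a basis of connected neighborhoods at $p$, proving local connectedness.

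The main obstacle is that Proposition~2.15 as stated provides only a disjoint decomposition with no control on how one cell's closure meets the others, so the equivalence relation above need not have the clopen property and the neighborhood construction need not be connected. Producing the required cylindrical refinement is the technical heart of the proof, and is typically established by induction on the ambient dimension using the Tarski--Seidenberg projection theorem (\Cref{proj}) to propagate a decomposition of $\R^n$ to one of $\R^{n+1}$ that projects compatibly. Once that refinement is available, both the finiteness of components and the local connectedness reduce to elementary topological bookkeeping on the finite cell complex.
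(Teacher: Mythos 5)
This statement is quoted directly from Coste's textbook and the paper supplies no proof of it; it is one of several auxiliary facts that the paper explicitly states ``without proof.'' There is therefore no argument in the paper to compare your sketch against, so let me assess it on its own terms.

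Your overall strategy --- finite cell decomposition plus topological bookkeeping --- is the standard route, but the first assertion does not actually require the cylindrical refinement on which you spend most of the argument. From the plain decomposition $A = \bigsqcup_{i \in I} C_i$ given by Proposition~2.15, each cell $C_i$ is connected; hence whenever a connected component $K$ of $A$ meets a cell $C_i$, the set $K \cup C_i$ is connected (two connected sets with a common point), so maximality of $K$ forces $C_i \subset K$. Thus every component is a union of cells, distinct components use disjoint collections of cells, and therefore there are at most $|I|$ components, each a finite union of semialgebraic cells and hence semialgebraic. Your equivalence relation on $I$, the chains of closure inclusions, and the clopen argument for $A_J$ are all unnecessary for this half --- they reconstruct the components ``from below'' and thereby import a compatibility hypothesis that the direct ``from above'' argument never needs.

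For local connectedness the situation is genuinely different: a mere disjoint decomposition into cells gives no control on how small neighborhoods of a point meet several cells, so some adapted structure --- a cylindrical decomposition compatible with the point, or the local conic structure theorem --- really is required, and you correctly anticipate this. However, your sketch treats the key claim (that for small $\varepsilon$ each $B(p,\varepsilon)\cap C_j$ for $C_j$ incident to $p$ is connected with $p$ in its closure, and that their union is connected) as an assertion rather than proving it; this is precisely where the cylindrical structure must be used, and it is nontrivial. So your proposal correctly identifies the technical heart of the theorem, overuses that machinery in the first half where it is superfluous, and leans on it in the second half where the essential details remain unsupplied.
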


\end{document}